\newtheorem{theorem}{Theorem}[section]
\newtheorem{lemma}[theorem]{Lemma}
\newtheorem{corollary}[theorem]{Corollary}
\theoremstyle{plain}
\newtheorem{definition}[theorem]{Definition}
\newtheorem{example}[theorem]{Example}
\newtheorem{remark}[theorem]{Remark}
\newtheorem{question}[theorem]{Question}
\theoremstyle{definition}
\theoremstyle{remark}
\numberwithin{equation}{section}
\begin{document}

\title[K-theory and tangent spaces to Hilbert schemes]{K-theory, local cohomology and tangent spaces to Hilbert schemes}

\author{Sen Yang}
\address{Yau Mathematical Sciences Center, Tsinghua University
\\
Beijing, China}
\email{syang@math.tsinghua.edu.cn; senyangmath@gmail.com}

\subjclass[2010]{14C25}
\date{}

\begin{abstract}
By using K-theory, we construct a map from the tangent space to the Hilbert scheme at a point $Y$ to the local cohomology group: $\pi:  \mathrm{T_{Y}Hilb}^{p}(X) \to H_{y}^{p}(\Omega_{X/\mathbb{Q}}^{p-1})$.
We use this map $\pi$ to answer (after slight modification) a question by Mark Green and Phillip Griffiths on constructing a map from the tangent space $\mathrm{T_{Y}Hilb}^{p}(X)$ to the Hilbert scheme at a point $Y$ to the tangent space to the cycle group $TZ^{p}(X)$.
 
\end{abstract}

\maketitle

\tableofcontents

\section{\textbf{Introduction}}
\label{Introduction}
Let $X$ be a smooth projective variety over a field $k$ of characteristic $0$ and let $Y \subset X$ be a subvariety of codimension $p$.  Considering $Y$ as an element of $\mathrm{Hilb}^{p}(X)$,  it is well known that the Zariski tangent space $\mathrm{T}_{Y}\mathrm{Hilb}^{p}(X)$ can be identified with $H^{0}(Y, \mathcal{N}_{Y/X})$, where $\mathcal{N}_{Y/X}$ is the normal sheaf.

$Y$ also defines an element of the cycle group $Z^{p}(X)$ and we are interested in defining the tangent space $TZ^{p}(X)$ to the cycle group $Z^{p}(X)$. In
\cite{GGtangentspace}, Mark Green and Phillip Griffiths define $TZ^{p}(X)$ for $p=1$(divisors) and $p=\mathrm{dim}(X)$(0-cycles) and leave the general case as an open question. Much of their theory was extended by Benjamin Dribus, Jerome W. Hoffman and the author in \cite{DHY, Y-3}. In \cite{Y-3}, we define $TZ^{p}(X)$ for any integer $p$ satisfying $1\leqslant p \leqslant \mathrm{dim}(X)$, generalizing Green and Griffiths' definitions. 
We recall the following fact from \cite{Y-3} for our purpose, and refer to \cite{GGtangentspace, Y-3} for definition of $TZ^{p}(X)$.

\begin{theorem} [Theorem 2.8 in  \cite{Y-3}]  \label{theorem: Yangtangent} 
For $X$ a smooth projective variety over a field $k$ of characteristic $0$, for any integer $p \geqslant 1$, the tangent space $TZ^{p}(X)$ is identified with $\mathrm{Ker}(\partial_{1}^{p,-p})$:
\[
  TZ^{p}(X) \cong \mathrm{Ker}(\partial_{1}^{p,-p}),
\]
where $\partial_{1}^{p,-p}$ is the differential of the Cousin complex \cite{Hartshorne} of $\Omega_{X/ \mathbb{Q}}^{p-1}$ in position $p$: 
 {\footnotesize
\begin{align*}
 0 \to  \Omega_{k(X)/ \mathbb{Q}}^{p-1} \to \cdots \to 
    \bigoplus_{y \in X^{(p)}} H_{y}^{p}(\Omega_{X/\mathbb{Q}}^{p-1}) \xrightarrow{\partial_{1}^{p,-p}} \bigoplus_{x \in X^{(p+1)}} H_{x}^{p+1}(\Omega_{X/\mathbb{Q}}^{p-1}) \to \cdots.
\end{align*}
}
\end{theorem}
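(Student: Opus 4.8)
The plan is to realize $TZ^{p}(X)$ as a kernel in a Cousin complex built from $K$-theory and then identify that complex, sheaf by sheaf, with the Cousin complex of $\Omega^{p-1}_{X/\mathbb{Q}}$. Recall from \cite{Y-3}, in the spirit of \cite{GGtangentspace}, the starting point. Writing $\mathcal{C}^{\bullet}(\mathcal{F})$ for the Cousin complex \cite{Hartshorne} of a Zariski sheaf $\mathcal{F}$ on $X$, Quillen's resolution of the $K$-theory sheaf $\mathcal{K}_{p}$ gives $\mathcal{C}^{i}(\mathcal{K}_{p}) = \bigoplus_{y \in X^{(i)}} (i_{y})_{*} K_{p-i}(k(y))$; its degree-$p$ term is $\bigoplus_{y \in X^{(p)}} \mathbb{Z} = Z^{p}(X)$, its degree-$(p{+}1)$ term vanishes because negative $K$-theory of a field is $0$, and thus $Z^{p}(X) = \mathrm{Ker}\bigl(\mathcal{C}^{p}(\mathcal{K}_{p})(X) \to \mathcal{C}^{p+1}(\mathcal{K}_{p})(X)\bigr)$. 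The space $TZ^{p}(X)$ is defined in \cite{Y-3} as the exact analogue of this kernel with $\mathcal{K}_{p}$ replaced by the relative $K$-theory sheaf $\mathcal{T}\mathcal{K}_{p}$ of $X_{\varepsilon} := X \times_{k} \mathrm{Spec}\, k[\varepsilon]/(\varepsilon^{2})$ over $X$, normalized so as to keep only the top Kähler summand --- this normalization is the point at which the statement deviates slightly from the divisor and zero-cycle cases of \cite{GGtangentspace}. So it suffices to identify $\mathcal{C}^{\bullet}(\mathcal{T}\mathcal{K}_{p})$ with the Cousin complex of $\Omega^{p-1}_{X/\mathbb{Q}}$ appearing in the statement.

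The heart of the proof is a local computation of $K$-theory of the dual numbers. Every local ring $A$ of $X$ is regular and contains $\mathbb{Q}$, and $K_{n}(A[\varepsilon], (\varepsilon))$ is then controlled by Kähler differentials, with top piece $\Omega^{n-1}_{A/\mathbb{Q}}$: for the Milnor part this is Bloch's explicit computation (via $\{1+\varepsilon a, u_{1}, \dots, u_{n-1}\} \mapsto a\, d\log u_{1} \wedge \cdots \wedge d\log u_{n-1}$), and for the full statement one invokes Goodwillie's theorem identifying rational relative $K$-theory with relative cyclic homology (which here gives $K_{n}(A[\varepsilon],(\varepsilon)) \cong \Omega^{n-1}_{A/\mathbb{Q}} \oplus \Omega^{n-3}_{A/\mathbb{Q}} \oplus \cdots$). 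Sheafifying over $X$ and applying the normalization above yields an isomorphism of Zariski sheaves $\mathcal{T}\mathcal{K}_{n} \cong \Omega^{n-1}_{X/\mathbb{Q}}$.

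Two consequences finish the argument. First, $\Omega^{p-1}_{X/\mathbb{Q}}$ is a flat --- in fact locally free of (possibly infinite) rank --- $\mathcal{O}_{X}$-module: take exterior powers of the locally split short exact sequence $0 \to \mathcal{O}_{X} \otimes_{k} \Omega^{1}_{k/\mathbb{Q}} \to \Omega^{1}_{X/\mathbb{Q}} \to \Omega^{1}_{X/k} \to 0$ and use local freeness of $\Omega^{1}_{X/k}$. Consequently $\Omega^{p-1}_{X/\mathbb{Q}}$ is Cohen--Macaulay, $H^{i}_{y}(\Omega^{p-1}_{X/\mathbb{Q}})$ vanishes unless $i = \mathrm{codim}\,y$, and its Cousin complex is exactly the flasque resolution displayed in the statement, with degree-$i$ term $\bigoplus_{y \in X^{(i)}}(i_{y})_{*}H^{i}_{y}(\Omega^{p-1}_{X/\mathbb{Q}})$. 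Second, the Cousin complex is functorial in the sheaf, so the isomorphism $\mathcal{T}\mathcal{K}_{p} \cong \Omega^{p-1}_{X/\mathbb{Q}}$ promotes to an isomorphism of complexes $\mathcal{C}^{\bullet}(\mathcal{T}\mathcal{K}_{p}) \cong \mathcal{C}^{\bullet}(\Omega^{p-1}_{X/\mathbb{Q}})$ carrying the differential out of the degree-$p$ term to $\partial_{1}^{p,-p}$; taking kernels on global sections gives $TZ^{p}(X) \cong \mathrm{Ker}(\partial_{1}^{p,-p})$.

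The main obstacle is the second paragraph, compounded by the bookkeeping forced by working over $\mathbb{Q}$: the sheaves $\Omega^{q}_{X/\mathbb{Q}}$ are not of finite type over $\mathcal{O}_{X}$, so both the $K$-theoretic input and the Gersten--Cousin formalism must be carried out through a filtered colimit over smooth finitely generated $\mathbb{Q}$-subalgebras of $k$ (equivalently, over spreads of $X$), with flatness, local cohomology and the Cousin differentials checked to be compatible with these colimits --- routine but lengthy, since local cohomology commutes with filtered colimits over Noetherian rings. The other delicate point is fixing the normalization that isolates the $\Omega^{p-1}_{X/\mathbb{Q}}$-summand from Goodwillie's decomposition and checking that it is respected by the boundary maps; this is precisely the ingredient behind the ``slight modification'' in \cite{Y-3}.
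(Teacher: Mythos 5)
The paper does not prove this statement: it is quoted verbatim from Theorem 2.8 of \cite{Y-3}, so there is no in-paper proof to compare against. Judged against the machinery that \emph{is} recalled here (Theorem \ref{theorem:Chernisomorphism} and its Corollary, and the commutative diagram imported from Theorem 3.14 of \cite{Y-2} at the end of Section 4), your outline has the right skeleton and matches the intended strategy: define $TZ^{p}(X)$ as the kernel at position $p$ of a relative Milnor--K-theoretic Gersten-type complex for $X[\varepsilon]$, identify the relative K-groups with Kähler differentials via Goodwillie/Cathelineau, and match the two complexes. The observation that $\Omega^{p-1}_{X/\mathbb{Q}}$ is a filtered colimit of locally free sheaves, hence Cohen--Macaulay with $H^{i}_{y}$ concentrated in $i=\operatorname{codim} y$, is also the standard and correct justification for the displayed form of the Cousin complex.

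The step that does not work as written is ``sheafify the local computation to get $\mathcal{T}\mathcal{K}_{p}\cong\Omega^{p-1}_{X/\mathbb{Q}}$ and then invoke functoriality of the Cousin complex.'' The K-theoretic complex whose degree-$p$ kernel is $TZ^{p}(X)$ is not the Hartshorne Cousin complex of a Zariski sheaf $\mathcal{T}\mathcal{K}_{p}$; its terms are relative K-groups \emph{with support}, $K^{M}_{p-i}(O_{X,x}[\varepsilon]\ \mathrm{on}\ x[\varepsilon],\varepsilon)$ for $x\in X^{(i)}$, arising from the coniveau/Balmer filtration on $D^{\mathrm{perf}}(X[\varepsilon])$ (note $X[\varepsilon]$ is non-reduced, so Quillen's d\'evissage picture for $\mathcal{K}_{p}$ does not transport directly). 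A stalkwise computation of $K_{n}(A[\varepsilon],(\varepsilon))$ for local rings $A$ does not by itself produce the isomorphisms $K^{M}_{0}(O_{X,y}[\varepsilon]\ \mathrm{on}\ y[\varepsilon],\varepsilon)\cong H^{p}_{y}(\Omega^{p-1}_{X/\mathbb{Q}})$, nor their compatibility with the boundary maps on both sides; this is exactly why the proof needs the \emph{space-level} (spectrum-level) versions of Goodwillie's and Cathelineau's theorems from appendix B of \cite{CHW}, so that the Chern character is a map of presheaves of spectra inducing isomorphisms on K-theory with supports and commuting with the localization sequences that define the differentials. You flag the boundary-map compatibility as a ``delicate point to be checked,'' but it is the core of the argument rather than a normalization issue, and the sheaf-functoriality shortcut you propose in its place would not supply it.
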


Now,  we want to study the relation between $\mathrm{T_{Y}Hilb}^{p}(X)$ and $TZ^{p}(X)$. The following question is suggested by Mark Green and Phillip Griffiths in \cite{GGtangentspace}(see page 18 and page 87-89):
\begin{question}  \cite{GGtangentspace} \label{question: comparetangent}
For $X$ a smooth projective variety over a field $k$ of characteristic $0$, for any integer $p \geqslant 1$, is it possible to define a map from the tangent space $\mathrm{T_{Y}Hilb}^{p}(X)$ to the Hilbert scheme at a point $Y$ to the tangent space to the cycle group $TZ^{p}(X)$:
 \[
  \mathrm{T_{Y}Hilb}^{p}(X)  \to TZ^{p}(X) ?
 \]
\end{question}

For $p=\mathrm{dim}(X)$, this has been answered affirmatively by Green and Griffiths in \cite{GGtangentspace}, see Section 7.2 for details:
\begin{theorem} \cite{GGtangentspace} \label{theorem: GGVSHIlbert}
For $p=d :=\mathrm{dim}(X)$, there exists a map from the tangent space to the Hilbert scheme at a point $Y$ to the tangent space to the cycle group
 \[
  \mathrm{T_{Y}Hilb}^{d}(X)  \to TZ^{d}(X).
 \]
\end{theorem}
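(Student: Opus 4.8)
The plan is to reconstruct, in the language of this paper, the map of \cite{GGtangentspace}. Since $p=d=\dim X$, the subscheme $Y$ is zero-dimensional, supported on finitely many closed points, and moreover $X^{(d+1)}=\emptyset$; thus in the Cousin complex of Theorem~\ref{theorem: Yangtangent} the target $\bigoplus_{x\in X^{(d+1)}}H^{d+1}_{x}(\Omega^{d-1}_{X/\mathbb Q})$ of $\partial_1^{d,-d}$ is zero, and by Theorem~\ref{theorem: Yangtangent}
\[
TZ^{d}(X)\;\cong\;\mathrm{Ker}(\partial_1^{d,-d})\;=\;\bigoplus_{y\in X^{(d)}}H^{d}_{y}\bigl(\Omega^{d-1}_{X/\mathbb Q}\bigr).
\]
Hence it suffices to construct a $k$-linear map $\mathrm{T_{Y}Hilb}^{d}(X)\to\bigoplus_{y}H^{d}_{y}(\Omega^{d-1}_{X/\mathbb Q})$ --- that is, to build the map $\pi$ of the abstract in the top codimension, where no closedness condition arises. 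By additivity one may work at a single closed point $y$ in the support of $Y$, with $R=\mathcal O_{X,y}$ a regular local ring of dimension $d$ and $\mathfrak a\subset R$ the ideal cutting out $Y$ near $y$.

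First I would reinterpret a tangent vector $v\in\mathrm{T_{Y}Hilb}^{d}(X)=H^{0}(Y,\mathcal N_{Y/X})$, whose localization at $y$ is an element of $\mathrm{Hom}_{R/\mathfrak a}(\mathfrak a/\mathfrak a^{2},R/\mathfrak a)$, as a flat first-order deformation $\widetilde{\mathfrak a}\subset R[\varepsilon]$, $\varepsilon^{2}=0$, reducing to $\mathfrak a$ modulo $\varepsilon$, and attach to it a $K$-theoretic cycle class. The finite $k[\varepsilon]$-scheme $\operatorname{Spec}(R[\varepsilon]/\widetilde{\mathfrak a})$ determines, via the Bloch--Quillen identification (the Gersten/Cousin resolution of the Milnor $K$-sheaf $\mathcal K^{M}_{d}$ on $X\times\operatorname{Spec} k[\varepsilon]$), a class in $H^{d}_{y}(\mathcal K^{M}_{d})$; when $\widetilde{\mathfrak a}=(f_1+\varepsilon g_1,\dots,f_d+\varepsilon g_d)$ is generated by a regular sequence this is the Milnor symbol $\{f_1+\varepsilon g_1,\dots,f_d+\varepsilon g_d\}$. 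The key input is the computation of relative Milnor $K$-theory of the dual numbers: for a local $\mathbb Q$-algebra $A$ one has $K^{M}_{d}(A[\varepsilon],(\varepsilon))\cong\Omega^{d-1}_{A/\mathbb Q}$ via $\{1+a\varepsilon,b_2,\dots,b_d\}\mapsto a\,\tfrac{db_2}{b_2}\wedge\cdots\wedge\tfrac{db_d}{b_d}$ (van der Kallen for $d=2$, and the analogous statement in general). Writing $f_j+\varepsilon g_j=f_j\bigl(1+\varepsilon g_j/f_j\bigr)$, expanding the symbol by multilinearity, discarding terms in $\varepsilon^{2}$, and applying this isomorphism term by term yields
\[
\pi(v)\;=\;\sum_{j=1}^{d}(-1)^{j-1}\,\frac{g_j\,df_1\wedge\cdots\wedge\widehat{df_j}\wedge\cdots\wedge df_d}{f_1 f_2\cdots f_d}\;\in\;H^{d}_{y}\bigl(\Omega^{d-1}_{X/\mathbb Q}\bigr),
\]
read as a Grothendieck generalized fraction with denominator the regular sequence $f_1,\dots,f_d$; when $Y$ is reduced with $\mathfrak a=(x_1,\dots,x_d)$ and $v=\sum_j\bar a_j\,\partial/\partial x_j$, this is, up to sign, the contraction of the class $\tfrac{dx_1\wedge\cdots\wedge dx_d}{x_1\cdots x_d}$ against $v$. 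When $Y$ is non-reduced or not a local complete intersection the explicit symbol is unavailable, and one instead takes $\pi(v)$ to be the image of $\operatorname{Spec}(R[\varepsilon]/\widetilde{\mathfrak a})$ under the boundary map in the localization sequence for the \emph{relative} Milnor $K$-sheaf on $X\times\operatorname{Spec} k[\varepsilon]$ with supports on $Y$, whose Gersten resolution is identified, term by term, with the Cousin complex of $\Omega^{d-1}_{X/\mathbb Q}$.

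It then remains to check that $\pi$ is well defined and $k$-linear: that $\pi(v)$ is independent of the chosen generators (two regular sequences generating $\widetilde{\mathfrak a}$ differ by an element of $\mathrm{GL}_d(R[\varepsilon])$, under which the Milnor symbol, hence its $\varepsilon$-linear part, is invariant); independent of the lifts $g_j$ modulo $\widetilde{\mathfrak a}$, so that $\pi(v)$ depends only on the homomorphism $\mathfrak a/\mathfrak a^{2}\to R/\mathfrak a$, i.e.\ only on $v$; and additive over the support. Since $\mathrm{Ker}(\partial_1^{d,-d})$ is here the whole group, composing $\pi$ with the isomorphism of Theorem~\ref{theorem: Yangtangent} then gives the desired map $\mathrm{T_{Y}Hilb}^{d}(X)\to TZ^{d}(X)$. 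I expect the main obstacle to be exactly this well-definedness for \emph{arbitrary} zero-dimensional $Y$: since the naive symbol only exists in the complete intersection case, one must carry out the whole construction at the level of the Gersten/localization sequence and verify functoriality of the boundary map there, and one must control relative $K_d$ of $R[\varepsilon]$ for $R$ merely a regular local ring (not a field), so that the identification $K^{M}_{d}(\mathcal O_{X}[\varepsilon],(\varepsilon))\simeq\Omega^{d-1}_{X/\mathbb Q}$ holds stalk-wise on $X$. The case $p=d$ is nonetheless strictly easier than general $p$, where one must \emph{in addition} prove $\pi(v)\in\mathrm{Ker}(\partial_1^{p,-p})$.
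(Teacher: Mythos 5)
Your proposal is correct in outline and isolates exactly the point the paper relies on: for $p=d$ one has $X^{(d+1)}=\emptyset$, so $\partial_1^{d,-d}=0$ and $TZ^{d}(X)=\bigoplus_{y\in X^{(d)}}H^{d}_{y}(\Omega^{d-1}_{X/\mathbb Q})$, whence the whole content is the construction of $\pi$ with no closedness condition to verify. Note, however, that the statement itself is quoted from Green--Griffiths and the paper does not reprove it directly; what the paper does is recover it as a corollary of its own map $\pi$, and there your route to $\pi$ genuinely differs from the paper's. You build $\pi$ out of Milnor symbols $\{f_1+\varepsilon g_1,\dots,f_d+\varepsilon g_d\}$ and the van der Kallen/Bloch-type computation $K^{M}_{d}(A[\varepsilon],(\varepsilon))\cong\Omega^{d-1}_{A/\mathbb Q}$, which is essentially the original Green--Griffiths symbol calculus; the paper instead places the Koszul complex $F_{\bullet}(f_1+\varepsilon g_1,\dots,f_p+\varepsilon g_p)$ in the idempotent-completed Verdier quotient (Theorem~\ref{theorem: Balmer theorem}), uses the Gillet--Soul\'e Adams operations to put it in the weight-$p$ eigenspace, and realizes the Chern character concretely via the Ang\'eniol--Lejeune-Jalabert local fundamental class, trace map, and truncation $\rfloor\,\partial/\partial\varepsilon|_{\varepsilon=0}$ (Lemma~\ref{theorem: describe}); both give the same generalized fraction $\sum_j g_j\,df_1\wedge\cdots\widehat{df_j}\cdots\wedge df_d$ over the regular sequence $(f_1,\dots,f_d)$, but the paper's version is the one that extends uniformly to all codimensions $p$ and interfaces with the Milnor K-theoretic cycle groups of Theorem~\ref{theorem: MainThmCycles}. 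One caveat in your writeup: the difficulty you flag for non-reduced or non-l.c.i.\ zero-dimensional $Y$ does not arise in the paper's setting, since $Y$ is taken to be a subvariety, so $O_{Y,y}$ is a field and $(\mathcal I_Y)_y$ is automatically generated by a regular sequence of length $d$; conversely, your appeal to a Gersten resolution of the Milnor $K$-sheaf on the non-reduced scheme $X[\varepsilon]$ is precisely the delicate point that the paper's machinery (Balmer's filtration plus Adams eigenspaces, from \cite{Y-2}) is designed to replace.
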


The main result of this short note is to construct a map in Definition ~\ref{definition: composition}
\[
\pi: \mathrm{T_{Y}Hilb}^{p}(X) \to H_{y}^{p}(\Omega_{X/\mathbb{Q}}^{p-1}),
\]
and use this map to study the above Question ~\ref{question: comparetangent} by Mark Green and Phillip Griffiths.

In Example \ref{example: trivialexample}, we show, for general $Y\subset X$ of codimension $p$ and $Y' \in \mathrm{T_{Y}Hilb}^{p}(X)$ , $\pi(Y')$ may not lie in $TZ^{p}(X)$(the kernel of $\partial_{1}^{p,-p}$).
However, we will show that, in Theorem ~\ref{theorem: mainTheorem},
 there exists $Z \subset X$ of codimension $p$ and exists $Z' \in \mathrm{T_{Z}Hilb}^{p}(X)$ such that $\pi(Y')+\pi(Z') \in TZ^{p}(X)$.

As an application, we show how to find Milnor K-theoretic cycles in Theorem \ref{theorem: MainThmCycles}. In \cite{Y-4}, we will apply these techniques to eliminate obstructions to deforming curves on a three-fold.

\textbf{Acknowledgements}
This short note is a follow-up of \cite{Y-3}. The author is very grateful to Mark Green and Phillip Griffiths for asking interesting questions.  He is also very grateful to Spencer Bloch \cite{Bloch1} and Christophe Soul\'e(Remark 4.3) for sharing their ideas.

The author thanks the following professors for discussions: Benjamin Dribus, David Eisenbud, Jerome Hoffman, Luc Illusie, Chao Zhang. He also thanks the anonymous referee(s) for professional suggestions that improved this note a lot.

\textbf{Notations and conventions}.

(1.) K-theory used in this note will be Thomason-Trobaugh non-connective K-theory, if not stated otherwise. 

(2.) For any abelian group $M$, $M_{\mathbb{Q}}$ denotes the image of $M$ in $M \otimes_{\mathbb{Z}} \mathbb{Q}$. 

(3.) $X[\varepsilon]$ denote the first order trivial deformation of $X$, i.e., $X[\varepsilon]=X \times_{k} \mathrm{Spec}(k[\varepsilon]/(\varepsilon^{2}))$, where $k[\varepsilon]/(\varepsilon^{2})$ is the ring of dual numbers.

\section{\textbf{K-theory and tangent spaces to Hilbert schemes} }
\label{K-theory and tangent spaces to Hilbert schemes}
For $X$ a smooth projective variety over a field $k$ of characteristic $0$ and $Y \subset X$ a subvariety of codimension $p$,
let $i: Y \to X$ be the inclusion, then $i_{\ast}O_{Y}$ is a coherent $O_{X}$-module and  can be resolved by a bounded complex of vector bundles on $X$.  Let $Y^{'}$ be a first order deformation of $Y$, that is, $Y^{'} \subset X[\varepsilon] $ such that $Y^{'}$ is flat over $\mathrm{Spec}(k[\varepsilon]/(\varepsilon^{2}))$ and $Y^{'} \otimes_{k[\varepsilon]/(\varepsilon^{2})} k \cong Y$. Then $i_{\ast}O_{Y'}$ can be resolved by a bounded complex of vector bundles on $X[\varepsilon]$, where $i: Y^{'} \to X[\varepsilon]$.

 Let $D^{\mathrm{perf}}(X[\varepsilon])$ denote the derived category of perfect complexes of $O_{X}[\varepsilon]$-modules, and let $\mathcal{L}_{(i)}(X[\varepsilon]) \subset D^{\mathrm{perf}}(X[\varepsilon])$ be defined as
\[
  \mathcal{L}_{(i)}(X[\varepsilon]) := \{ E \in D^{\mathrm{perf}}(X[\varepsilon]) \mid \mathrm{codim_{Krull}(supph(E))} \geq -i \},
\]
where the closed subset $\mathrm{supph(E)} \subset X$ is the support of the total homology of the perfect complex $E$.

The resolution of $i_{\ast}O_{Y'}$, which is a perfect complex of $O_{X}[\varepsilon]$-module supported on $Y$, defines an element of the Verdier quotient $\mathcal{L}_{(-p)}(X[\varepsilon])/\mathcal{L}_{(-p-1)}(X[\varepsilon])$, denoted $[i_{\ast}O_{Y'}]$.

In general, the length of the perfect complex $[i_{\ast}O_{Y'}]$ may not be equal to $p$. Since $Y \subset X$ is of codimension $p$, for our purpose, we expect the perfect complex $[i_{\ast}O_{Y'}]$ is of length $p$. To achieve this, instead of considering $[i_{\ast}O_{Y'}]$ as an element of  the Verdier quotient $\mathcal{L}_{(-p)}(X[\varepsilon])/\mathcal{L}_{(-p-1)}(X[\varepsilon])$, we consider its image in the idempotent completion $(\mathcal{L}_{(-p)}(X[\varepsilon])/\mathcal{L}_{(-p-1)}(X[\varepsilon]))^{\#}$,  denoted $[i_{\ast}O_{Y'}]^{\#}$, where the idempotent completion is in the sense of Balmer-Schlichting \cite{B-S}.
 And we have the following result: 
\begin{theorem} \cite{B-3} \label{theorem: Balmer theorem}
 For each $i \in \mathbb{Z}$, localization induces an equivalence
\[
 (\mathcal{L}_{(i)}(X[\varepsilon])/\mathcal{L}_{(i-1)}(X[\varepsilon]))^{\#}  \simeq \bigsqcup_{x[\varepsilon] \in X[\varepsilon]^{(-i)}}D_{{x[\varepsilon]}}^{\mathrm{perf}}(X[\varepsilon])
\]
between the idempotent completion of the Verdier quotient $\mathcal{L}_{(i)}(X[\varepsilon])/\mathcal{L}_{(i-1)}(X[\varepsilon])$ and the coproduct over $x[\varepsilon] \in X[\varepsilon]^{(-i)}$ of the derived category of perfect complexes of $ O_{X[\varepsilon],x[\varepsilon]}$-modules with homology supported on the closed point $x[\varepsilon] \in \mathrm{Spec}(O_{X[\varepsilon],x[\varepsilon]})$. And consequently, one has 
\[
 K_{0}((\mathcal{L}_{(i)}(X[\varepsilon])/\mathcal{L}_{(i-1)}(X[\varepsilon]))^{\#})  \simeq \bigoplus_{x[\varepsilon] \in X[\varepsilon]^{(-i)}}K_{0}(D_{{x[\varepsilon]}}^{\mathrm{perf}}(X[\varepsilon])).
\]
\end{theorem}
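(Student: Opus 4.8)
The statement is the derived-categorical incarnation, for the thickened scheme $X[\varepsilon]$, of Quillen's computation of the $E_{1}$-page of the coniveau (Krull-codimension) spectral sequence; I would prove it exactly along the lines of Thomason--Trobaugh and Balmer--Schlichting \cite{B-S}, noting that nothing peculiar to $X[\varepsilon]$ intervenes. The first step is to present the Verdier subquotient as a filtered colimit of ``support'' subquotients. For closed subsets $Z' \subseteq Z$ of $X[\varepsilon]$ write $D^{\mathrm{perf}}_{Z}(X[\varepsilon])$ for the full triangulated subcategory of perfect complexes whose total-homology support lies in $Z$; since the underlying space of $X[\varepsilon]$ is that of $X$, ``$\mathrm{codim}_{\mathrm{Krull}}(\mathrm{supph}(E)) \geq -i$'' means precisely that $E$ is supported in some closed $Z$ with $\mathrm{codim}\,Z \geq -i$. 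Because a perfect complex has a well-defined support, and two such complexes become isomorphic in $\mathcal{L}_{(i)}/\mathcal{L}_{(i-1)}$ exactly when the cone is supported in codimension $\geq -i+1$, one gets
\[
  \mathcal{L}_{(i)}(X[\varepsilon])/\mathcal{L}_{(i-1)}(X[\varepsilon]) \;\simeq\; \varinjlim_{Z' \subseteq Z}\; D^{\mathrm{perf}}_{Z}(X[\varepsilon])\big/ D^{\mathrm{perf}}_{Z'}(X[\varepsilon]),
\]
the colimit running over the filtered poset of pairs of closed subsets with $\mathrm{codim}\,Z \geq -i$ and $\mathrm{codim}\,Z' \geq -i+1$.

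The second step is to evaluate each term by Thomason's localization theorem. For $U := X[\varepsilon]\setminus Z'$ the restriction functor $D^{\mathrm{perf}}(X[\varepsilon]) \to D^{\mathrm{perf}}(U)$ is a Verdier localization with kernel $D^{\mathrm{perf}}_{Z'}(X[\varepsilon])$, and its image is a dense (cofinal) triangulated subcategory; moreover it carries $D^{\mathrm{perf}}_{Z}(X[\varepsilon])$ onto the perfect complexes supported in $Z\cap U$. Hence
\[
  \bigl(D^{\mathrm{perf}}_{Z}(X[\varepsilon])\big/ D^{\mathrm{perf}}_{Z'}(X[\varepsilon])\bigr)^{\#} \;\simeq\; D^{\mathrm{perf}}_{Z\cap U}(U)^{\#},
\]
where the idempotent completion $(-)^{\#}$ is forced in exactly because Thomason's theorem yields an equivalence only up to summands --- this is the Balmer--Schlichting setup. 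Passing to the colimit, $Z\cap U$ shrinks until each generic point of the codimension-$(-i)$ part becomes closed in the ambient open set; continuity of $D^{\mathrm{perf}}$ under filtered systems of affine opens then identifies the local contribution at a point $x[\varepsilon]$ with $D^{\mathrm{perf}}_{x[\varepsilon]}(X[\varepsilon])$, the perfect complexes over $O_{X[\varepsilon],x[\varepsilon]}=O_{X,x}[\varepsilon]$ supported at the closed point. Since every perfect complex has closed support meeting $X[\varepsilon]^{(-i)}$ in finitely many points and morphisms between objects supported at distinct codimension-$(-i)$ points die in the subquotient, the colimit is a coproduct; using that idempotent completion commutes with filtered colimits of triangulated categories, one obtains
\[
  (\mathcal{L}_{(i)}(X[\varepsilon])/\mathcal{L}_{(i-1)}(X[\varepsilon]))^{\#} \;\simeq\; \bigsqcup_{x[\varepsilon]\in X[\varepsilon]^{(-i)}} D^{\mathrm{perf}}_{x[\varepsilon]}(X[\varepsilon]),
\]
as claimed.

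For the consequence on $K_{0}$: an equivalence of idempotent-complete triangulated categories induces an isomorphism on $K_{0}$, and $K_{0}$ sends coproducts of triangulated categories to direct sums, $K_{0}\bigl(\bigsqcup_{j}\mathcal{T}_{j}\bigr)\cong\bigoplus_{j}K_{0}(\mathcal{T}_{j})$; applying this to the equivalence above gives the stated decomposition of $K_{0}((\mathcal{L}_{(i)}/\mathcal{L}_{(i-1)})^{\#})$.

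The main obstacle is the bookkeeping in the second step: verifying that the filtered colimit really is computed one codimension stratum at a time --- i.e.\ that after localizing away everything of codimension $\geq -i+1$ what survives is exactly the disjoint union of the local categories at the codimension-$(-i)$ points --- and that the idempotent completion, which does \emph{not} commute with an individual Verdier quotient but \emph{does} commute with the filtered colimit, may be moved past the colimit. All of this is carried out in \cite{B-S} and in Thomason--Trobaugh; for $X[\varepsilon]$ no new input is required, since its topological space and the localizations of its structure sheaf are simply the $\varepsilon$-thickenings of those of $X$, and $X[\varepsilon]$ remains Noetherian of finite Krull dimension, hence quasi-compact and quasi-separated.
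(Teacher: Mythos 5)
This theorem is quoted from Balmer \cite{B-3} and the paper supplies no proof of its own; your reconstruction via the filtered-colimit presentation of the Verdier subquotient, Thomason--Trobaugh localization, and the Balmer--Schlichting idempotent completion is precisely the argument of the cited source, and it applies to $X[\varepsilon]$ without change since the underlying space and Krull codimensions are those of $X$. The proposal is correct and takes essentially the same approach as the reference the paper relies on.
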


Let $y$ be the generic point of $Y$ and let $\mathcal{I}_{Y}$ be the ideal sheaf of $Y$. Then there exists the following short exact sequence:
\[
 0 \to \mathcal{I}_{Y} \to O_{X} \to i_{\ast}O_{Y} \to 0, 
\]
whose localization at $y$ is the short exact sequence:
\[
0 \to (\mathcal{I}_{Y})_{y} \to O_{X, y} \to (i_{\ast}O_{Y})_{y} \to 0.
\]

We have $O_{Y,y}= O_{X, y}/(\mathcal{I}_{Y})_{y}$. Since $O_{Y,y}$ is a field, $(\mathcal{I}_{Y})_{y}$ is the maximal ideal of  the regular local ring (of dimension $p$)$O_{X, y}$. So the maximal ideal $(\mathcal{I}_{Y})_{y}$ is generated by a regular sequence of length $p$: $f_{1}, \cdots, f_{p}$. 

Let $\mathcal{I}_{Y'}$ be the ideal sheaf of $Y'$,  then $\mathcal{I}_{Y'}/(\varepsilon)\mathcal{I}_{Y'} = \mathcal{I}_{Y}$ because of flatness. So we have $(\mathcal{I}_{Y'})_{y}/(\varepsilon)(\mathcal{I}_{Y'})_{y} = (\mathcal{I}_{Y})_{y}$. Lift  $f_{1}, \cdots, f_{p}$ to $f_{1}+\varepsilon g_{1}, \cdots, f_{p}+\varepsilon g_{p}$ in $(\mathcal{I}_{Y'})_{y}$, where $g_{1}, \cdots, g_{p} \in O_{X, y}$, then $f_{1}+\varepsilon g_{1}, \cdots, f_{p}+\varepsilon g_{p}$ generates $(\mathcal{I}_{Y'})_{y}$ because of Nakayama's lemma:
\[
(\mathcal{I}_{Y'})_{y} = (f_{1}+\varepsilon g_{1}, \cdots, f_{p}+\varepsilon g_{p}). 
\]
Moreover, $f_{1}+\varepsilon g_{1}, \cdots, f_{p}+\varepsilon g_{p}$ is a regular sequence which can be checked directly.

We see that $Y$ is generically defined by a regular sequence of length $p$: $f_{1}, \cdots, f_{p}$, where $f_{1}, \cdots, f_{p} \in O_{X, y}$.  $Y'$ is generically given by lifting  $f_{1}, \cdots, f_{p}$ to $f_{1}+\varepsilon g_{1}, \cdots, f_{p}+\varepsilon g_{p}$, where $g_{1}, \cdots, g_{p} \in O_{X, y}$.
We use $F_{\bullet}(f_{1}+\varepsilon g_{1}, \cdots, f_{p}+\varepsilon g_{p})$ to denote the Koszul complex associated to the  regular sequence $f_{1}+\varepsilon g_{1}, \cdots, f_{p}+\varepsilon g_{p}$,  which is a resolution of $O_{X, y}[\varepsilon]/(f_{1}+\varepsilon g_{1}, \cdots, f_{p}+\varepsilon g_{p})$:
\[
 \begin{CD}
  0 @>>> F_{p} @>A_{p}>> F_{p-1} @>A_{p-1}>>  \dots @>A_{2}>> F_{1} @>A_{1}>> F_{0} @>>> 0,
 \end{CD}
\]
where each $F_{i}=\bigwedge^{i} (O_{X,y}[\varepsilon])^{\oplus p}$ and $A_{i}: \bigwedge^{i} (O_{X,y}[\varepsilon])^{\oplus p}  \to \bigwedge^{i-1} (O_{X,y}[\varepsilon])^{\oplus p}$ are defined as usual.

Under the equivalence in Theorem \ref{theorem: Balmer theorem}, the localization at the generic point $y$ sends $[i_{\ast}O_{Y'}]^{\#}$ to the Koszul complex $F_{\bullet}(f_{1}+\varepsilon g_{1}, \cdots, f_{p}+\varepsilon g_{p})$:
\[
[i_{\ast}O_{Y'}]^{\#} \to F_{\bullet}(f_{1}+\varepsilon g_{1}, \cdots, f_{p}+\varepsilon g_{p}).
\]

Recall that Milnor K-groups with support are rationally defined in terms of eigenspaces of Adams operations in \cite{Y-2}.
\begin{definition}  [Definition 3.2 in \cite{Y-2}] \label{definition:Milnor K-theory with support}
Let $X$ be a finite equi-dimensional noetherian scheme and $x \in X^{(j)}$. For $m \in \mathbb{Z}$, Milnor K-group with support $K_{m}^{M}(O_{X,x} \ \mathrm{on} \ x)$ is rationally defined to be 
\[
  K_{m}^{M}(O_{X,x} \ \mathrm{on} \ x) := K_{m}^{(m+j)}(O_{X,x} \ \mathrm{on} \ x)_{\mathbb{Q}},
\] 
where $K_{m}^{(m+j)}$ is the eigenspace of $\psi^{k}=k^{m+j}$ and $\psi^{k}$ are the Adams operations.

\end{definition}

\begin{theorem} [Prop 4.12 of \cite{GilletSoule}]  \label{theorem: GilletSoule}
The Adams operations $\psi^{k}$ defined on perfect complexes, defined by Gillet-Soul\'e in \cite{GilletSoule}, satisfy
$\psi^{k}(F_{\bullet}(f_{1}+\varepsilon g_{1}, \cdots, f_{p}+\varepsilon g_{p})) = k^{p}F_{\bullet}(f_{1}+\varepsilon g_{1}, \cdots, f_{p}+\varepsilon g_{p})$.
\end{theorem}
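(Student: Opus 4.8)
The plan is to reduce the statement to the single case $p=1$ by exploiting two features of the Gillet--Soul\'e construction in \cite{GilletSoule}: first, that the Adams operations $\psi^{k}$ on perfect complexes are \emph{multiplicative} for the (derived) tensor product, $\psi^{k}(E\otimes E') = \psi^{k}(E)\otimes\psi^{k}(E')$; and second, that the Koszul complex on a sequence factors as an external/tensor product of length-one Koszul complexes. Concretely, writing $h_{i}:=f_{i}+\varepsilon g_{i}\in O_{X,y}[\varepsilon]$, one has a canonical isomorphism of complexes
\[
 F_{\bullet}(h_{1},\dots,h_{p}) \;\cong\; F_{\bullet}(h_{1})\otimes_{O_{X,y}[\varepsilon]}\cdots\otimes_{O_{X,y}[\varepsilon]} F_{\bullet}(h_{p}),
\]
where each $F_{\bullet}(h_{i})$ is the two-term complex $\bigl[\,O_{X,y}[\varepsilon]\xrightarrow{\,\cdot h_{i}\,}O_{X,y}[\varepsilon]\,\bigr]$ sitting in homological degrees $1$ and $0$. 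By multiplicativity it then suffices to prove $\psi^{k}\bigl(F_{\bullet}(h_{i})\bigr)=k\cdot F_{\bullet}(h_{i})$ for each $i$, after which
\[
 \psi^{k}\bigl(F_{\bullet}(h_{1},\dots,h_{p})\bigr)=\bigotimes_{i=1}^{p}\psi^{k}\bigl(F_{\bullet}(h_{i})\bigr)=\bigotimes_{i=1}^{p} k\cdot F_{\bullet}(h_{i})=k^{p}\,F_{\bullet}(h_{1},\dots,h_{p}).
\]

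For the case $p=1$, the two-term complex $F_{\bullet}(h)$, viewed in the derived category of perfect complexes (equivalently, via the equivalence of Theorem \ref{theorem: Balmer theorem}, in the relevant localized category with support), represents a $K$-theoretic first Chern class: it is quasi-isomorphic to the image of $1-[\mathcal{L}]$ under the inclusion of $K$-theory with support in $V(h)$, where $\mathcal{L}=O_{X[\varepsilon]}(-V(h))$ is the line bundle associated to the Cartier divisor $V(h)$. The formula $\psi^{k}(1-[\mathcal{L}])=1-[\mathcal{L}^{\otimes k}]$ together with $1-[\mathcal{L}^{\otimes k}]=k\,(1-[\mathcal{L}])+(\text{terms in higher $\gamma$-filtration})$ shows that $\psi^{k}$ acts by multiplication by $k$ on this class \emph{modulo} the next step of the codimension filtration; since the support of $F_{\bullet}(h)$ is of codimension exactly $1$, that correction term lies in $\mathcal{L}_{(-2)}$ relative to the piece we have localized at and therefore vanishes in the Verdier quotient $\mathcal{L}_{(-1)}(X[\varepsilon])/\mathcal{L}_{(-2)}(X[\varepsilon])$ (and a fortiori after localizing further at $y$ and idempotent-completing). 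Hence $\psi^{k}(F_{\bullet}(h))=k\cdot F_{\bullet}(h)$, completing the reduction. Alternatively, and more efficiently, one observes that the entire computation is precisely \cite[Prop.~4.12]{GilletSoule} applied to the regular local ring $O_{X,y}$ base-changed to $k[\varepsilon]$ and to the regular sequence $h_{1},\dots,h_{p}$.

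The main obstacle is not the combinatorics of the Koszul complex but the \emph{category} in which the identity is asserted: in $K_{0}$ of vector bundles on $X[\varepsilon]$ the class of $F_{\bullet}(h)$ is simply $[O]-[O]=0$, so the equality $\psi^{k}=k^{p}$ is vacuous there and worthless. The content is that Gillet--Soul\'e's $\psi^{k}$ is defined on the Waldhausen/Grayson level of perfect complexes, hence descends to the Verdier quotients $\mathcal{L}_{(i)}(X[\varepsilon])/\mathcal{L}_{(i-1)}(X[\varepsilon])$ and their idempotent completions appearing in Theorem \ref{theorem: Balmer theorem}, is multiplicative there, and is compatible with the codimension filtration used in the $p=1$ step. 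Thus the real work in writing this up is to check that the Gillet--Soul\'e operations and their multiplicativity pass to the localized categories $D_{y}^{\mathrm{perf}}(X[\varepsilon])$ and that their construction is insensitive to the nilpotent $\varepsilon$ in the base $k[\varepsilon]/(\varepsilon^{2})$; granting this, the argument above is routine.
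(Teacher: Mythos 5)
The paper does not actually prove this statement: it is imported wholesale as Proposition 4.12 of Gillet--Soul\'e \cite{GilletSoule}, applied to the regular sequence $f_{1}+\varepsilon g_{1},\dots,f_{p}+\varepsilon g_{p}$ in $O_{X,y}[\varepsilon]$. So the ``more efficient alternative'' you offer at the end is precisely the paper's entire argument, and the body of your proposal is an independent reconstruction of what lies behind that citation. The reconstruction is essentially correct and follows the standard route: factor the Koszul complex as $F_{\bullet}(h_{1},\dots,h_{p})\cong F_{\bullet}(h_{1})\otimes\cdots\otimes F_{\bullet}(h_{p})$, invoke multiplicativity of $\psi^{k}$, and settle the rank-one case. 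Two points should be made explicit if you write this out. First, multiplicativity of the Gillet--Soul\'e operations holds at the level of classes in $K_{0}$ with supports (compatibility with the product $K_{0}^{Y}(X)\otimes K_{0}^{Z}(X)\to K_{0}^{Y\cap Z}(X)$) and is itself a theorem of \cite{GilletSoule}, not a formal consequence of the Dold--Puppe construction; it is not an identity of complexes. Second, you do not need the passage to the Verdier quotient to discard the ``higher $\gamma$-filtration'' correction in your $p=1$ step: Gillet--Soul\'e prove $\psi^{k}([K(s)])=\theta^{k}(E^{\vee})\cdot[K(s)]$ with $\theta^{k}$ the Bott class of the rank-$p$ bundle $E$, and since the Koszul complex here is built on free modules one has $\theta^{k}(O^{\oplus p})=k^{p}$ on the nose, so the identity already holds in $K_{0}(O_{X,y}[\varepsilon]\ \mathrm{on}\ y[\varepsilon])$ before any localization or idempotent completion. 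Your closing remark --- that the statement is vacuous in $K_{0}$ of vector bundles and only has content in $K$-theory with supports --- is correct and is exactly why the paper works with $K_{0}(O_{X,y}[\varepsilon]\ \mathrm{on}\ y[\varepsilon])$ and the categories of Theorem \ref{theorem: Balmer theorem} rather than with $K_{0}(X[\varepsilon])$.
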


Hence, $F_{\bullet}(f_{1}+\varepsilon g_{1}, \cdots, f_{p}+\varepsilon g_{p})$ is of eigenweight $p$ and can be considered as an element of $K^{(p)}_{0}(O_{X,y}[\varepsilon] \ \mathrm{on} \ y[\varepsilon])_{\mathbb{Q}}$: 
\[
F_{\bullet}(f_{1}+\varepsilon g_{1}, \cdots, f_{p}+\varepsilon g_{p}) \in K^{(p)}_{0}(O_{X,y}[\varepsilon] \ \mathrm{on} \ y[\varepsilon])_{\mathbb{Q}}=K^{M}_{0}(O_{X,y}[\varepsilon] \ \mathrm{on} \ y[\varepsilon]).
\]

\begin{definition}  \label{definition: map1}
We define a  map $\mu:  \mathrm{T}_{Y} \mathrm{Hilb}^{p}(X) \to K^{M}_{0}(O_{X,y}[\varepsilon] \ \mathrm{on} \ y[\varepsilon])$ as follows: 
\begin{align*}
\mu:   \ \mathrm{T}_{Y} & \mathrm{Hilb}^{p}(X)  \to K^{M}_{0}(O_{X,y}[\varepsilon] \ \mathrm{on} \ y[\varepsilon]) \\
& Y' \longrightarrow  F_{\bullet}(f_{1}+\varepsilon g_{1}, \cdots, f_{p}+\varepsilon g_{p}).
\end{align*}
\end{definition}

\section{\textbf{Chern character}}
\label{Chern character}
 For any integer $m$, let $K^{(i)}_{m}(O_{X,y}[\varepsilon] \ \mathrm{on} \ y[\varepsilon],\varepsilon)_{\mathbb{Q}}$ denote the weight $i$ eigenspace of the relative K-group, that is, the kernel of the natural projection
\[
K^{(i)}_{m}(O_{X,y}[\varepsilon] \ \mathrm{on} \ y[\varepsilon])_{\mathbb{Q}}  \xrightarrow{\varepsilon =0} K^{(i)}_{m}(O_{X,y} \ \mathrm{on} \ y)_{\mathbb{Q}}.
\]

Recall that we have proved the following isomorphisms in \cite{DHY, Y-2}:
\begin{theorem} [Corollary 9.5 in \cite{DHY}, Corollary 3.11 in \cite{Y-2}] \label{theorem:Chernisomorphism}
Let $X$ be a smooth projective variety over a field $k$ of characteristic $0$ and 
let $y \in X^{(p)}$. Chern character(from K-theory to negative cyclic homology) induces the following isomorphisms between relative K-groups and local cohomology groups:
\[
 \begin{CD}
  K^{(i)}_{m}(O_{X,y}[\varepsilon] \ \mathrm{on} \ y[\varepsilon],\varepsilon)_{\mathbb{Q}} \cong H_{y}^{p}(\Omega^{\bullet,(i)}_{X/\mathbb{Q}}),
 \end{CD}
\]
where
\[
\begin{cases}
 \Omega_{X/ \mathbb{Q}}^{\bullet,(i)}  & =  \Omega^{{2i-(m+p)-1}}_{X/ \mathbb{Q}},    \mathrm{for} \  \frac{m+p}{2}  < \ i \leq m+p, \\
  \Omega_{X/ \mathbb{Q}}^{\bullet,(i)} & =  0,    \mathrm{else}.
 \end{cases}
\]

\end{theorem}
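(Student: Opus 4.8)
The plan is to route both sides through cyclic homology, which is exactly what the Chern character in the statement suggests, and to exploit the fact that on the \emph{relative} (i.e.\ $\varepsilon$-adic) part Goodwillie's theorem turns the trace map into an isomorphism. Concretely, work with the idempotent-complete stable dg-category $\mathcal{P}_{y[\varepsilon]}$ of perfect complexes on $\mathrm{Spec}(O_{X,y}[\varepsilon])$ whose homology is supported at the closed point $y[\varepsilon]$, together with the analogous category on the punctured spectrum. Both non-connective $K$-theory and negative cyclic homology $HN$ are localizing invariants, so the "with support" groups are the homotopy groups of the fibres of the localization maps, and the Goodwillie Chern character $K\to HN$ is a natural transformation of such invariants, compatible with the $\lambda$-operations on each side. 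Taking fibres of reduction mod $\varepsilon$ everywhere produces the relative groups in the statement, and since $(\varepsilon)$ is nilpotent and we are over $\mathbb{Q}$, Goodwillie's theorem (in the form: the relative $K$-theory of a square-zero extension agrees, through the trace, with relative $HN$) gives
\[
 K^{(i)}_{m}(O_{X,y}[\varepsilon]\ \mathrm{on}\ y[\varepsilon],\varepsilon)_{\mathbb{Q}} \;\cong\; HN^{(i)}_{m}(O_{X,y}[\varepsilon]\ \mathrm{on}\ y[\varepsilon],\varepsilon)_{\mathbb{Q}}.
\]

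Next, still using that $(\varepsilon)$ is nilpotent over $\mathbb{Q}$, relative periodic cyclic homology vanishes, so the fundamental fibre sequence relating $HC$, $HN$ and $HP$ collapses on the relative part and yields $HN^{(i)}_{m}(-,\varepsilon)_{\mathbb{Q}}\cong HC^{(i-1)}_{m-1}(-,\varepsilon)_{\mathbb{Q}}$, the weight dropping by one along with the homological degree. Everything is thus reduced to computing \emph{relative cyclic homology with support} of the dual numbers.

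For the cyclic-homology computation, first ignore the support. For a smooth $\mathbb{Q}$-algebra $R$, Hochschild--Kostant--Rosenberg identifies $HH_{n}(R/\mathbb{Q})$ with $\Omega^{n}_{R/\mathbb{Q}}$, and analysing the mixed complex of $R[\varepsilon]$ (the relative direction is generated by $d\varepsilon$, where the Connes operator degenerates) gives $HC^{(j)}_{n}(R[\varepsilon],(\varepsilon))_{\mathbb{Q}}\cong \Omega^{\,2j-n}_{R/\mathbb{Q}}$ for $\tfrac n2\le j\le n$ and $0$ otherwise; for $R$ a field and $n\le 2$ this is the classical computation of Bloch and van der Kallen. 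Now reinstate the support: since $k/\mathbb{Q}$ is separable, $X$ is smooth over $\mathbb{Q}$, so every $\Omega^{j}_{X/\mathbb{Q}}$ is locally free over $O_{X}$, and running the same computation inside the coniveau/localization tower of the regular local ring $O_{X,y}$, the passage to complexes supported at $y$ replaces the affine term by $R\Gamma_{y}$ of these sheaves. As $R\Gamma_{y}(\Omega^{j}_{X/\mathbb{Q}})$ is concentrated in cohomological degree $p=\mathrm{codim}(y)$ (local cohomology of a free module over a regular local ring of dimension $p$), this contributes precisely $H^{p}_{y}(\Omega^{j}_{X/\mathbb{Q}})$; the degree shift by $p$ replaces "$m$" by "$m+p$", and — since the Koszul/Thom class implementing the shift has Adams weight $p$ by Theorem~\ref{theorem: GilletSoule}, and Gillet--Soul\'e's Adams operations on perfect complexes match the $\lambda$-operations on cyclic homology under the Chern character — the eigenweight bookkeeping produces exactly the range $\tfrac{m+p}{2}<i\le m+p$ and the form degree $2i-(m+p)-1$. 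Combining these steps gives the asserted isomorphism.

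The delicate point is the "reinstate the support" part: one needs a localized, $\lambda$-equivariant Hochschild--Kostant--Rosenberg statement identifying the cyclic homology of the category $\mathcal{P}_{y[\varepsilon]}$ — an a priori purely noncommutative invariant of a category \emph{not} of the form $\mathrm{Perf}(\text{regular ring})$ — with local cohomology of Kähler differentials, and it must be carried out even though $O_{X,y}[\varepsilon]$ is not regular, so that d\'evissage and the identification $K\simeq G$ are unavailable (this is exactly why one cannot simply replace $O_{X,y}$ by its residue field $\kappa(y)$). Making the two shifts — by $p$ in homological degree and, compatibly, by $p$ in Adams weight — come out consistently on both sides of the Chern character is the crux. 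The cleanest way to organise it is to first prove the field case $p=0$ directly, where the answer $K^{(i)}_{m}(F[\varepsilon],(\varepsilon))_{\mathbb{Q}}\cong\Omega^{\,2i-m-1}_{F/\mathbb{Q}}$ is classical, and then propagate it through the Gersten resolution of the regular local ring $O_{X,y}$, which automatically inserts the twist by the codimension $p$.
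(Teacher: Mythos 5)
Your proposal is correct in outline and follows essentially the same route as the paper's sources: the paper itself gives no proof of this theorem beyond citing Corollary 9.5 of \cite{DHY} and Corollary 3.11 of \cite{Y-2} and noting that the main tool is the space-level (presheaf-of-spectra) version of Goodwillie's and Cathelineau's isomorphisms from Appendix B of \cite{CHW}, which is exactly your mechanism of treating $K$ and $HN$ as localizing invariants so that the relative Chern character isomorphism passes to fibres over the punctured spectrum and yields the supported statement. Your weight/degree bookkeeping (homological degree shifted by $p$, top Adams weight becoming $m+p$, giving $\Omega^{2i-(m+p)-1}$ on the range $\tfrac{m+p}{2}<i\leq m+p$) reproduces the stated formula, and you correctly identify the compatibility of Gillet--Soul\'e's Adams operations with the $\lambda$-decomposition of cyclic homology as the delicate point that the cited references must (and do) address.
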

The main tool for proving these isomorphisms is the space-level versions of Goodwillie's and Cathelineau's isomorphisms, proved in the appendix B of \cite{CHW}.

Let $K^{M}_{m}(O_{X,y}[\varepsilon] \ \mathrm{on} \ y[\varepsilon],\varepsilon)$ denote the relative K-group, that is, the kernel of the natural projection
\[
K^{M}_{m}(O_{X,y}[\varepsilon] \ \mathrm{on} \ y[\varepsilon]) \xrightarrow{\varepsilon =0} K^{M}_{m}(O_{X,y} \ \mathrm{on} \ y).
\] 
In other words, $K^{M}_{m}(O_{X,y}[\varepsilon] \ \mathrm{on} \ y[\varepsilon],\varepsilon)$ is 
$K^{(m+p)}_{m}(O_{X,y}[\varepsilon] \ \mathrm{on} \ y[\varepsilon],\varepsilon)_{\mathbb{Q}}$.
In particular,  by taking $i=p$ and $m=0$ in Theorem ~\ref{theorem:Chernisomorphism}, we obtain the following formula:

\begin{corollary}
\[
 K^{M}_{0}(O_{X,y}[\varepsilon] \ \mathrm{on} \ y[\varepsilon],\varepsilon) \xrightarrow{\cong} H_{y}^{p}(\Omega^{p-1}_{X/\mathbb{Q}}).
\]
\end{corollary}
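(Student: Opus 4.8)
The plan is to obtain this as the special case $m = 0$, $i = p$ of Theorem~\ref{theorem:Chernisomorphism}. First I would unwind the left-hand side. Since $y \in X^{(p)}$, Definition~\ref{definition:Milnor K-theory with support} with $j = p$ and $m = 0$ gives $K^{M}_{0}(O_{X,y}[\varepsilon]\ \mathrm{on}\ y[\varepsilon]) = K^{(p)}_{0}(O_{X,y}[\varepsilon]\ \mathrm{on}\ y[\varepsilon])_{\mathbb{Q}}$, and likewise $K^{M}_{0}(O_{X,y}\ \mathrm{on}\ y) = K^{(p)}_{0}(O_{X,y}\ \mathrm{on}\ y)_{\mathbb{Q}}$. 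The projection induced by $\varepsilon = 0$ is compatible with the Adams operations, hence respects the weight decomposition; passing to kernels therefore yields $K^{M}_{0}(O_{X,y}[\varepsilon]\ \mathrm{on}\ y[\varepsilon],\varepsilon) = K^{(p)}_{0}(O_{X,y}[\varepsilon]\ \mathrm{on}\ y[\varepsilon],\varepsilon)_{\mathbb{Q}}$, which is precisely the group on the left of Theorem~\ref{theorem:Chernisomorphism} with $m = 0$, $i = p$.

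Next I would verify that $(m,i) = (0,p)$ lies in the non-trivial range of the theorem: the condition $\frac{m+p}{2} < i \le m+p$ becomes $\frac{p}{2} < p \le p$, valid for every $p \ge 1$. Hence the theorem applies and computes $\Omega^{\bullet,(i)}_{X/\mathbb{Q}} = \Omega^{2i-(m+p)-1}_{X/\mathbb{Q}} = \Omega^{2p-p-1}_{X/\mathbb{Q}} = \Omega^{p-1}_{X/\mathbb{Q}}$, so the right-hand side reads $H_{y}^{p}(\Omega^{p-1}_{X/\mathbb{Q}})$. Combining this with the identification of the left-hand side, the isomorphism of the corollary is exactly the one furnished by Theorem~\ref{theorem:Chernisomorphism}, induced by the Chern character to negative cyclic homology.

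The real content of the statement is entirely external: it resides in the proof of Theorem~\ref{theorem:Chernisomorphism} (Corollary 9.5 of \cite{DHY}, Corollary 3.11 of \cite{Y-2}), which rests on the space-level Goodwillie and Cathelineau isomorphisms of Appendix B of \cite{CHW}. The only work left here is bookkeeping — confirming that the rational Milnor K-group with support, defined as an Adams eigenspace, carries the weight that matches degree $p-1$ forms, and that relativization (the kernel of $\varepsilon = 0$) is taken compatibly on both sides. I do not anticipate any further obstacle.
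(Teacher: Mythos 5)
Your proposal is correct and follows exactly the route the paper takes: the corollary is obtained by specializing Theorem~\ref{theorem:Chernisomorphism} to $m=0$, $i=p$, after identifying $K^{M}_{0}(O_{X,y}[\varepsilon]\ \mathrm{on}\ y[\varepsilon],\varepsilon)$ with the weight-$p$ eigenspace $K^{(p)}_{0}(O_{X,y}[\varepsilon]\ \mathrm{on}\ y[\varepsilon],\varepsilon)_{\mathbb{Q}}$ via Definition~\ref{definition:Milnor K-theory with support} with $j=p$. Your range check $\tfrac{p}{2}<p\le p$ and the computation $\Omega^{2p-p-1}=\Omega^{p-1}$ are precisely the bookkeeping the paper leaves implicit.
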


\begin{definition} \label{definition: Chern}
Let $X$ be a smooth projective variety over a field $k$ of characteristic $0$ and let $y \in X^{(p)}$. There exists the following natural surjective map
\[
\mathrm{Ch}:  K^{M}_{0}(O_{X,y}[\varepsilon] \ \mathrm{on} \ y[\varepsilon]) \to H_{y}^{p}(\Omega_{X/\mathbb{Q}}^{p-1}),
\]
which is defined to be the composition of the natural projection
\[
K^{M}_{0}(O_{X,y}[\varepsilon] \ \mathrm{on} \ y[\varepsilon]) \to 
K^{M}_{0}(O_{X,y}[\varepsilon] \ \mathrm{on} \ y[\varepsilon], \varepsilon) ,
\] 
and the following isomorphism
\[
 K^{M}_{0}(O_{X,y}[\varepsilon] \ \mathrm{on} \ y[\varepsilon],\varepsilon) \xrightarrow{\cong} H_{y}^{p}(\Omega^{p-1}_{X/\mathbb{Q}}).
\]

\end{definition}

Now, We recall a beautiful construction of Ang\'eniol and Lejeune-Jalabert which describes the map in Definition ~\ref{definition: Chern}
\[
\mathrm{Ch}:  K^{M}_{0}(O_{X,y}[\varepsilon] \ \mathrm{on} \ y[\varepsilon]) \to H_{y}^{p}(\Omega_{X/\mathbb{Q}}^{p-1}). 
\]

An element $M \in K^{M}_{0}(O_{X,y}[\varepsilon] \ \mathrm{on} \ y[\varepsilon]) \subset K_{0}(O_{X,y}[\varepsilon] \ on \ y[\varepsilon])_{\mathbb{Q}}$ is represented by a strict perfect complex $L_{\bullet}$ supported at $y[\varepsilon]$: 
\[
 \begin{CD}
  0 @>>> F_{n} @>M_{n}>> F_{n-1} @>M_{n-1}>>  \dots @>M_{2}>> F_{1} @>M_{1}>> F_{0} @>>> 0,
 \end{CD}
\]
where each $F_{i}=O_{X,y}[\varepsilon]^{r_{i}}$ and $M_{i}$'s are matrices with entries in $O_{X,y}[\varepsilon]$.

\begin{definition} [Page 24 in \cite{A-LJ}] \label{definition: local cycles}
The local fundamental class attached to this perfect complex is defined to be the following collection:
\[
 [L_{\bullet}]_{loc}=\{\frac{1}{p!}dM_{i}\circ dM_{i+1}\circ \dots \circ dM_{i+p-1}\}, i =  0, 1, \cdots
\]
where $d=d_{\mathbb{Q}}$ and each $dM_{i}$ is the matrix of absolute differentials. In other words,
\[
 dM_{i} \in \mathrm{Hom}(F_{i},F_{i-1}\otimes \Omega_{O_{X,y}[\varepsilon]/\mathbb{Q}}^{1}).
\]
\end{definition}

\begin{theorem}  [Lemme 3.1.1 on page 24, Def 3.4 on page 29 in \cite{A-LJ}] \label{theorem: cycle}
 $[L_{\bullet}]_{loc}$ defined above is a cycle in 
  $\mathcal{H}om( L_{\bullet}, \Omega_{O_{X,y}[\varepsilon]/\mathbb{Q}}^{p}\otimes L_{\bullet})$, 
  and the image of $[L_{\bullet}]_{loc}$ in 
$
 H^{p}(\mathcal{H}om( L_{\bullet}, \Omega_{O_{X,y}[\varepsilon]/\mathbb{Q}}^{p}\otimes L_{\bullet}))
$
 does not depend on the choice of the basis of $L_{\bullet}$.
 
 \end{theorem}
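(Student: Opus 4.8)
The plan is to recognise $[L_\bullet]_{loc}$ as the $p$-th cup-power of the Atiyah-class cocycle of $L_\bullet$ and to run everything inside the multiplicative structure on the Hom-complexes. Put $R = O_{X,y}[\varepsilon]$, regard $L_\bullet = (F_\bullet, \delta)$ with $\delta = \bigoplus_i M_i$, and for each $q \geq 0$ form the Hom-complex $C_q^\bullet := \mathcal{H}om(L_\bullet, \Omega^q_{R/\mathbb{Q}} \otimes_R L_\bullet)$ with its usual differential $D = [\delta, -]$. Composition of homomorphisms followed by the wedge $\Omega^a \otimes \Omega^b \to \Omega^{a+b}$ makes $\bigoplus_q C_q^\bullet$ a graded differential algebra for which $D$ is a graded derivation. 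In this language $dM = \{dM_i\}$, the matrices of absolute differentials of $\delta$, is a degree-one element of $C_1^\bullet$, and unwinding the product shows that $(dM)^{\smile p}$ on the summand $F_{i+p-1}$ equals $dM_i \circ dM_{i+1} \circ \cdots \circ dM_{i+p-1}$; thus $[L_\bullet]_{loc} = \tfrac{1}{p!}(dM)^{\smile p}$.

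For the first assertion I would differentiate the relations $M_{i-1} M_i = 0$. Since the entries of the $M_i$ are degree-zero elements of $R$, the Leibniz rule for $d = d_{\mathbb{Q}}$ gives $dM_{i-1} \circ M_i + M_{i-1} \circ dM_i = 0$ for all $i$, and with the conventions fixed above this is exactly $D(dM) = 0$: the cochain $dM$ is a $1$-cocycle, namely the Atiyah cocycle of $L_\bullet$ relative to the chosen basis. Because $D$ is a derivation for $\smile$ and $D(dM) = 0$, we get $D\bigl((dM)^{\smile p}\bigr) = 0$, so $[L_\bullet]_{loc}$ is a cocycle in $C_p^\bullet$, i.e. in $\mathcal{H}om(L_\bullet, \Omega^p_{R/\mathbb{Q}} \otimes L_\bullet)$. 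This is the first half of the statement.

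For independence of the basis, observe that a change of basis of $L_\bullet$ replaces each $M_i$ by a conjugate $\widetilde M_i = P_{i-1}^{-1} M_i P_i$ with $P_i \in \mathrm{GL}_{r_i}(R)$, while the complex $C_p^\bullet$ and its cohomology remain intrinsically unchanged. Interpret $dM$ (resp. $d\widetilde M$) as $[\nabla, \delta]$ for the flat connection $\nabla$ (resp. $\widetilde\nabla$) determined by the old (resp. new) basis. The difference $\Theta := \nabla - \widetilde\nabla$ is $R$-linear and compatible with the grading, hence a degree-zero element of $C_1^\bullet$ — concretely it is assembled from the transition matrices as $(dP)P^{-1}$ — and one computes $dM - d\widetilde M = [\Theta, \delta] = \pm D\Theta$, a coboundary. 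Therefore $[dM] = [d\widetilde M]$ in $H^1(C_1^\bullet)$, and taking $p$-th cup-powers in the algebra $\bigoplus_q C_q^\bullet$ gives $\bigl[\tfrac{1}{p!}(dM)^{\smile p}\bigr] = \bigl[\tfrac{1}{p!}(d\widetilde M)^{\smile p}\bigr]$ in $H^p(C_p^\bullet)$, which is precisely the claimed independence. (If a bare-hands version is preferred, one reduces each $P_i$ to a product of elementary and diagonal matrices and writes the corresponding $\Theta$ explicitly in each case.)

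The routine-but-delicate point, where I expect to spend the most care, is fixing all the sign conventions — the grading and differential on $\mathcal{H}om(L_\bullet, \Omega^q \otimes L_\bullet)$, the Koszul signs in the wedge, and the sign in $D = [\delta, -]$ — so that (i) the differentiated Leibniz identity reads literally as $D(dM) = 0$, (ii) $(dM)^{\smile p}$ unwinds with no stray sign to the expression $\{\tfrac{1}{p!}\, dM_i \circ \cdots \circ dM_{i+p-1}\}$ of \cite{A-LJ}, and (iii) $dM - d\widetilde M$ equals $\pm D\Theta$ on the nose. None of this is conceptually hard; it is just the bookkeeping that makes the statement precise. One should also note, since $R = O_{X,y}[\varepsilon]$ carries nilpotents, that $\Omega^1_{R/\mathbb{Q}}$ together with its universal derivation $d$ is well-behaved enough for all the Leibniz manipulations above to be legitimate.
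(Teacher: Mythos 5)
The paper gives no proof of this statement --- it is quoted directly from Ang\'eniol--Lejeune-Jalabert \cite{A-LJ} (Lemme 3.1.1 and Def 3.4) --- so there is no internal argument to compare against; judged on its own, your reconstruction is correct and is essentially the argument of the cited source. Differentiating the relations $M_{i-1}M_{i}=0$ to get $dM_{i-1}\circ M_{i}+M_{i-1}\circ dM_{i}=0$ is exactly how \cite{A-LJ} establishes the cocycle condition (iterated there by hand rather than via the derivation property of $D$ on the algebra $\bigoplus_{q}\mathcal{H}om(L_{\bullet},\Omega^{q}\otimes L_{\bullet})$), and the $(dP)P^{-1}$ coboundary correctly handles the change of basis; your cup-product packaging is a slightly more structured but equivalent organization of the same computation, with only the acknowledged sign bookkeeping left to verify.
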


Since 
\[
 H^{p}(\mathcal{H}om( L_{\bullet}, \Omega_{O_{X,y}[\varepsilon]/\mathbb{Q}}^{p}\otimes L_{\bullet}))=\mathcal{E}XT^{p}(L_{\bullet}, \Omega_{O_{X,y}[\varepsilon]/\mathbb{Q}}^{p}\otimes L_{\bullet}),
\]
the above local fundamental class $[L_{\bullet}]_{loc}$ defines an element in $\mathcal{E}XT^{p}(L_{\bullet}, \Omega_{O_{X,y}[\varepsilon]/\mathbb{Q}}^{p}\otimes L_{\bullet})$: 
\[
  [L_{\bullet}]_{loc} \in \mathcal{E}XT^{p}(L_{\bullet}, \Omega_{O_{X,y}[\varepsilon]/\mathbb{Q}}^{p}\otimes L_{\bullet}).
\]

Noting $L_{\bullet}$ is supported on $y$(same underlying space as $y[\varepsilon]$), there exists the following trace map, see page 98-99 in [1] for details,
\[
 \mathrm{Tr}:  \mathcal{E}XT^{p}(L_{\bullet}, \Omega_{O_{X,y}[\varepsilon]/\mathbb{Q}}^{p}\otimes L_{\bullet}) \longrightarrow H_{y}^{p}(\Omega_{X[\varepsilon]/\mathbb{Q}}^{p}). 
\]

\begin{definition}   [Definition 2.3.2 on page 99 in \cite{A-LJ}]
The image of $[L_{\bullet}]_{loc}$ under the above trace map, denoted $\mathcal{V}^{p}_{L_{\bullet}}$, is called Newton class.
\end{definition}

$K_{0}(O_{X,y}[\varepsilon] \ on \ y[\varepsilon])$ is the Grothendieck group of the triangulated category $D^{b}(O_{X,y}[\varepsilon] \ \mathrm{on} \ y[\varepsilon])$, which is the derived category of perfect complexes of $ O_{X,y}[\varepsilon]$-modules with homology supported on the closed point $y[\varepsilon] \in \mathrm{Spec}(O_{X,y}[\varepsilon])$.
 Recall that the Grothendieck group of a triangulated category is the monoid of isomorphism objects modulo the submonoid formed from distinguished triangles.
\begin{theorem} [Proposition 4.3.1 on page 113 in \cite{A-LJ}]
The Newton class $\mathcal{V}^{p}_{L_{\bullet}}$ is well-defined on $K_{0}(O_{X,y}[\varepsilon] \ on \ y[\varepsilon])$.
\end{theorem}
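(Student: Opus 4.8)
The plan is to use the standard presentation of $K_{0}$ of the triangulated category $D^{b}(O_{X,y}[\varepsilon] \ \mathrm{on} \ y[\varepsilon])$: it is the free abelian group on quasi-isomorphism classes of strict perfect objects, modulo the relations $[L_{\bullet}] = [L'_{\bullet}] + [L''_{\bullet}]$ for every distinguished triangle $L'_{\bullet} \to L_{\bullet} \to L''_{\bullet} \to L'_{\bullet}[1]$. So two things must be checked: (a) $\mathcal{V}^{p}_{L_{\bullet}}$ depends only on the class of $L_{\bullet}$ in the derived category, not on the chosen strict perfect representative nor on its basis; and (b) $\mathcal{V}^{p}$ is additive along distinguished triangles. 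Granting (a) and (b), the Newton class factors through $K_{0}(O_{X,y}[\varepsilon] \ \mathrm{on} \ y[\varepsilon])$, which is the assertion.

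For (a): Theorem~\ref{theorem: cycle} already gives that the image of $[L_{\bullet}]_{loc}$ in $\mathcal{E}XT^{p}(L_{\bullet}, \Omega^{p}_{O_{X,y}[\varepsilon]/\mathbb{Q}} \otimes L_{\bullet})$, hence its trace $\mathcal{V}^{p}_{L_{\bullet}}$, is independent of the choice of basis of $L_{\bullet}$. It remains to see that passing to a quasi-isomorphic strict perfect complex does not change the class. By standard facts on perfect complexes this reduces to an elementary move, namely replacing $L_{\bullet}$ by $L_{\bullet} \oplus C_{\bullet}$ with $C_{\bullet}$ a contractible complex $\cdots \to 0 \to O_{X,y}[\varepsilon] \xrightarrow{\mathrm{id}} O_{X,y}[\varepsilon] \to 0 \to \cdots$. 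In the obvious adapted basis the differential matrices become block diagonal, one block being $M_{i}$ and the other the identity; since $d(\mathrm{id}) = 0$, the $C_{\bullet}$-block contributes nothing to $[L_{\bullet} \oplus C_{\bullet}]_{loc}$, and by basis independence $\mathcal{V}^{p}_{L_{\bullet} \oplus C_{\bullet}} = \mathcal{V}^{p}_{L_{\bullet}}$.

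For (b): every distinguished triangle in $D^{b}(O_{X,y}[\varepsilon] \ \mathrm{on} \ y[\varepsilon])$ is isomorphic to the one attached to a degreewise-split short exact sequence of strict perfect complexes $0 \to L'_{\bullet} \to L_{\bullet} \to L''_{\bullet} \to 0$. Choosing a basis of $L_{\bullet}$ compatible with this splitting, each differential is block upper triangular,
\[
 M_{i} = \begin{pmatrix} M'_{i} & N_{i} \\ 0 & M''_{i} \end{pmatrix},
\]
hence so is $dM_{i}$, and so is every composite $\frac{1}{p!}\, dM_{i} \circ dM_{i+1} \circ \cdots \circ dM_{i+p-1}$, with diagonal blocks the corresponding composites for $L'_{\bullet}$ and $L''_{\bullet}$. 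The trace map $\mathrm{Tr}$ is a supertrace over the whole complex, so it only sees the diagonal blocks; therefore $\mathcal{V}^{p}_{L_{\bullet}} = \mathcal{V}^{p}_{L'_{\bullet}} + \mathcal{V}^{p}_{L''_{\bullet}}$.

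The step I expect to be the main obstacle is making (b) rigorous at the level of the $\mathcal{E}XT$-class rather than of cochains: the off-diagonal terms $N_{i}$ produce genuine off-diagonal contributions to $[L_{\bullet}]_{loc}$ as a cocycle in $\mathcal{H}om(L_{\bullet}, \Omega^{p}_{O_{X,y}[\varepsilon]/\mathbb{Q}} \otimes L_{\bullet})$, and one must argue that, after identifying $\mathcal{E}XT^{p}$ through the block decomposition and applying the trace into $H_{y}^{p}(\Omega_{X[\varepsilon]/\mathbb{Q}}^{p})$, these contributions cancel or land in the part killed by $\mathrm{Tr}$. Equivalently, one checks that $\mathrm{Tr}$ is compatible with the two-step filtration of $L_{\bullet}$ by $L'_{\bullet}$ and with the induced projection to $L''_{\bullet}$; this compatibility of the trace with short exact sequences of perfect complexes is precisely where the bookkeeping of Ang\'enoil--Lejeune-Jalabert's construction must be invoked. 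A more conceptual fallback, should the direct computation become unwieldy, is to use the compatibility of the Newton class with the Chern character into negative cyclic homology together with the known additivity of the latter on distinguished triangles.
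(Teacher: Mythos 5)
The paper does not actually prove this statement: it is imported verbatim as Proposition 4.3.1 of Ang\'eniol--Lejeune-Jalabert \cite{A-LJ}, so there is no internal proof to compare your argument against. Judged on its own, your sketch is the natural reconstruction of their argument and its overall strategy is sound: reduce well-definedness on $K_{0}$ to (a) invariance under derived-category isomorphism and (b) additivity on distinguished triangles, the latter via degreewise split exact sequences of strict perfect complexes (every triangle being isomorphic to a mapping-cone triangle). Two points still need to be nailed down. First, in (a), the passage from ``quasi-isomorphic'' to ``isomorphic after adding contractible summands'' uses that over the local ring $O_{X,y}[\varepsilon]$ bounded complexes of finitely generated projectives are free and that quasi-isomorphisms between them are homotopy equivalences; this is standard but should be cited, and the basis-independence of Theorem \ref{theorem: cycle} then absorbs the resulting isomorphism of complexes. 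Second, your flagged worry in (b) is the real content: it is resolved because the trace of \cite{A-LJ} is computed at the cochain level as an alternating (super)trace, and a filtration-preserving cocycle, being block upper triangular in an adapted basis, has supertrace equal to the sum of the supertraces of its diagonal blocks; you should also record the shift relation $\mathcal{V}^{p}_{L_{\bullet}[1]}=-\mathcal{V}^{p}_{L_{\bullet}}$, which comes from the sign in the supertrace and is needed to turn split-exact additivity into the triangle relation $[L_{\bullet}]=[L'_{\bullet}]+[L''_{\bullet}]$. Your proposed fallback via the Chern character to negative cyclic homology should be avoided here, since in this paper the Ang\'eniol--Lejeune-Jalabert class is being used to \emph{describe} that Chern character, so the argument would risk circularity.
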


The truncation map $\rfloor  \dfrac{\partial}{\partial \varepsilon}\mid_{\varepsilon =0}: \Omega_{X[\varepsilon]/\mathbb{Q}}^{p} \to \Omega_{X/\mathbb{Q}}^{p-1}$ induces a map,
\begin{align*}
\rfloor  \dfrac{\partial}{\partial \varepsilon}\mid_{\varepsilon =0} :   H_{y}^{p}(\Omega_{X[\varepsilon]/\mathbb{Q}}^{p}) \longrightarrow H_{y}^{p}(\Omega_{X/\mathbb{Q}}^{p-1}).
\end{align*}

\begin{lemma}  \label{theorem: describe}
The map(in Definition \ref{definition: Chern})
\[
\mathrm{Ch}:  K^{M}_{0}(O_{X,y}[\varepsilon] \ \mathrm{on} \ y[\varepsilon]) \to H_{y}^{p}(\Omega_{X/\mathbb{Q}}^{p-1})
\]
can be described as a composition:
{\footnotesize
\begin{align*}
 K^{M}_{0}(O_{X,y}[\varepsilon] \ & \mathrm{on} \ y[\varepsilon]) \to \mathcal{E}XT^{p}(L_{\bullet}, \Omega_{O_{X,y}[\varepsilon]/\mathbb{Q}}^{p}\otimes L_{\bullet}) \to H_{y}^{p}(\Omega_{X[\varepsilon]/\mathbb{Q}}^{p}) \to H_{y}^{p}(\Omega_{X/\mathbb{Q}}^{p-1}) \\
&  L_{\bullet} \longrightarrow  [L_{\bullet}]_{loc} \longrightarrow \mathcal{V}^{p}_{L_{\bullet}} \longrightarrow \mathcal{V}^{p}_{L_{\bullet}} \rfloor  \dfrac{\partial}{\partial \varepsilon}\mid_{\varepsilon =0} .
\end{align*}
}
\end{lemma}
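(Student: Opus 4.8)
The plan is to compare two constructions of the Chern character / local fundamental class on the group $K^{M}_{0}(O_{X,y}[\varepsilon] \ \mathrm{on} \ y[\varepsilon])$: on one side the homological definition through relative $K$-theory, Goodwillie--Cathelineau isomorphisms and negative cyclic homology used to define $\mathrm{Ch}$ in Definition \ref{definition: Chern}, and on the other side the explicit Newton-class construction of Ang\'enol and Lejeune-Jalabert recalled in Definitions \ref{definition: local cycles}--\ref{definition: Chern} (the numbered results from \cite{A-LJ}). Both are maps out of $K_{0}$ of the category $D^{b}(O_{X,y}[\varepsilon] \ \mathrm{on} \ y[\varepsilon])$ landing in local cohomology, and the content of the lemma is precisely that they agree after the truncation $\rfloor\,\partial/\partial\varepsilon\mid_{\varepsilon=0}$.

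First I would reduce to checking the statement on generators of $K^{M}_{0}(O_{X,y}[\varepsilon] \ \mathrm{on} \ y[\varepsilon])$, namely Koszul complexes $F_{\bullet}(f_{1}+\varepsilon g_{1},\dots,f_{p}+\varepsilon g_{p})$ attached to regular sequences (this is legitimate because, as recalled after Definition \ref{definition:Milnor K-theory with support} and in Theorem \ref{theorem: GilletSoule}, such complexes have eigenweight $p$ and span the Milnor part rationally; cf.\ the discussion preceding Definition \ref{definition: map1}). For such a complex the differentials $M_{i}$ are the usual Koszul matrices with entries $f_{j}+\varepsilon g_{j}$, so the local fundamental class $[L_{\bullet}]_{loc}=\{\tfrac{1}{p!}dM_{i}\circ\cdots\circ dM_{i+p-1}\}$ can be written down completely explicitly: $d(f_{j}+\varepsilon g_{j})=df_{j}+\varepsilon\,dg_{j}+g_{j}\,d\varepsilon$, and composing $p$ of these and symmetrizing produces, up to sign and the factor $1/p!$, the wedge $d(f_{1}+\varepsilon g_{1})\wedge\cdots\wedge d(f_{p}+\varepsilon g_{p})$ together with the Koszul-type identification of $\mathcal{E}XT^{p}$ with local cohomology. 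Second, I would apply $\mathrm{Tr}$ and then the truncation $\rfloor\,\partial/\partial\varepsilon\mid_{\varepsilon=0}$: the $d\varepsilon$-component of $d(f_{1}+\varepsilon g_{1})\wedge\cdots\wedge d(f_{p}+\varepsilon g_{p})$ is $\sum_{j}(-1)^{j-1} g_{j}\, df_{1}\wedge\cdots\widehat{df_{j}}\cdots\wedge df_{p}$ (modulo $\varepsilon$), which is exactly the cocycle representing the generalized fraction / local cohomology class $\bigl[\begin{smallmatrix}\sum_{j}\pm g_{j}\,df_{1}\wedge\cdots\widehat{df_{j}}\cdots\wedge df_{p}\\ f_{1},\dots,f_{p}\end{smallmatrix}\bigr]$ in $H^{p}_{y}(\Omega^{p-1}_{X/\mathbb{Q}})$. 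Third, I would match this against the homological side: the relative $K$-class of $F_{\bullet}(f_{1}+\varepsilon g_{1},\dots)$ under the isomorphism of Theorem \ref{theorem:Chernisomorphism} is computed by the same Koszul/cyclic-homology recipe (this is how the isomorphism in \cite{DHY, Y-2} is set up — via the explicit Chern character to negative cyclic homology on Koszul complexes), and its image is the same local cohomology symbol. Comparing the two outputs on the generating set then gives the lemma, with the final step being to invoke the good functorial/additive properties of $[L_{\bullet}]_{loc}$ on $K_{0}$ (the cited Proposition 4.3.1 of \cite{A-LJ}) and of the homological $\mathrm{Ch}$ so that agreement on generators upgrades to agreement of maps.

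The main obstacle is the bookkeeping in the middle step: reconciling the Ang\'enol--Lejeune-Jalabert normalization (the $1/p!$, the symmetrization over the positions $i,\dots,i+p-1$, and the trace map into $H^{p}_{y}(\Omega^{p}_{X[\varepsilon]/\mathbb{Q}})$) with the negative-cyclic-homology normalization behind Theorem \ref{theorem:Chernisomorphism} (the Goodwillie/Cathelineau maps, the factor coming from the HKR isomorphism, and the precise identification $K^{M}_{0}(O_{X,y}[\varepsilon] \ \mathrm{on} \ y[\varepsilon],\varepsilon)\cong H^{p}_{y}(\Omega^{p-1}_{X/\mathbb{Q}})$). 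In practice I expect these normalizations to have been chosen compatibly — indeed both are characterized by sending the class of $O_{X,y}[\varepsilon]/(f_{1}+\varepsilon g_{1},\dots,f_{p}+\varepsilon g_{p})$ to the evident local cohomology symbol — so the proof amounts to pinning down that both maps are determined by their value on one such generator (using $GL$-equivariance, i.e.\ independence of the chosen basis, which for the ALJ side is Theorem \ref{theorem: cycle}) and then doing that single computation carefully; I would present the generator computation in detail and refer to \cite{A-LJ, DHY, Y-2} for the multiplicativity and base-change compatibilities that make the reduction valid.
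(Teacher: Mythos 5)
The paper states this lemma without proof, treating it as a recollection of the Ang\'eniol--Lejeune-Jalabert construction together with the bare assertion that the resulting composite computes the map $\mathrm{Ch}$ of Definition \ref{definition: Chern}; so your proposal, which actually tries to verify the compatibility, goes beyond what the paper records. Your overall strategy --- reduce to Koszul complexes of regular sequences, compute $[L_{\bullet}]_{loc}$ and its truncation explicitly there, and compare with the homological Chern character --- is the natural one, and your explicit formula for the $d\varepsilon$-component agrees with the computation the paper carries out immediately after the lemma (the diagram giving $g_{1}\,df_{2}\wedge\cdots\wedge df_{p}$ when $g_{2}=\cdots=g_{p}=0$).

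However, there are two genuine gaps. First, the reduction to generators: you assert that Koszul complexes of regular sequences span $K^{M}_{0}(O_{X,y}[\varepsilon]\ \mathrm{on}\ y[\varepsilon])$ rationally, citing only the fact that such complexes have Adams eigenweight $p$ (Theorem \ref{theorem: GilletSoule}). Eigenweight $p$ puts them \emph{into} the Milnor part; it does not show they \emph{generate} it. You would need a separate argument, e.g.\ surjectivity onto the relative summand via the identification with $H^{p}_{y}(\Omega^{p-1}_{X/\mathbb{Q}})$ (whose elements are generalized fractions), combined with the splitting of $K^{M}_{0}$ into its relative part and $K^{M}_{0}(O_{X,y}\ \mathrm{on}\ y)$ and d\'evissage for the latter. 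Second, and more seriously, the crux of the lemma is the agreement, on a single Koszul generator, of the truncated Ang\'eniol--Lejeune-Jalabert Newton class with the image under the Goodwillie--Cathelineau/negative-cyclic-homology isomorphism of Theorem \ref{theorem:Chernisomorphism}; you defer this to the expectation that ``these normalizations have been chosen compatibly.'' That expectation is exactly the statement to be proved, and without unwinding the isomorphism of \cite{CHW, DHY} on the Koszul complex --- or an abstract characterization pinning both maps down uniquely (additivity on $K_{0}$, compatibility with localization, and the value on the class of $O_{X,y}[\varepsilon]/(f_{1}+\varepsilon g_{1},\dots,f_{p}+\varepsilon g_{p})$) --- the argument is circular at its central step. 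If such a uniqueness statement is available you should state and prove it; as written, the key identification is assumed rather than established.
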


In particular, for the Koszul complex $F_{\bullet}(f_{1}+\varepsilon g_{1}, \cdots, f_{p}+\varepsilon g_{p})$ in Definition ~\ref{definition: map1}, 
the Ch map can be described as follows. The following diagram,  
\[
\begin{cases}
 \begin{CD}
   F_{\bullet}(f_{1}+\varepsilon g_{1}, \cdots, f_{p}+\varepsilon g_{p}) @>>> O_{X, y}[\varepsilon]/(f_{1}+\varepsilon g_{1}, \cdots, f_{p}+\varepsilon g_{p}) \\
  F_{p}(\cong O_{X,y}[\varepsilon]) @>[F_{\bullet}]_{loc}>> F_{0} \otimes \Omega_{O_{X,y}[\varepsilon]/ \mathbb{Q}}^{p}(\cong \Omega_{O_{X,y}[\varepsilon]/ \mathbb{Q}}^{p}), 
 \end{CD}
\end{cases}
\]
where $[F_{\bullet}]_{loc}$ is short for the local fundamental class attached to  $F_{\bullet}(f_{1}+\varepsilon g_{1}, \cdots, f_{p}+\varepsilon g_{p})$, gives an element in $Ext_{O_{X,y}[\varepsilon]}^{p}(O_{X,y}[\varepsilon]/(f_{1}+\varepsilon g_{1}, \cdots, f_{p}+\varepsilon g_{p}),\Omega_{X[\varepsilon]/ \mathbb{Q}}^{p})$. This further gives an element in $H_{y}^{p}(\Omega_{X[\varepsilon]/ \mathbb{Q}}^{p} )$, denoted $\mathcal{V}^{p}_{F_{\cdot}}$. 

We use $F_{\bullet}(f_{1}, \cdots, f_{p})$ to denote the Koszul complex associated to the  regular sequence $f_{1}, \cdots, f_{p}$,  which is a resolution of $O_{X, y}/(f_{1}, \cdots, f_{p})$.
The truncation of $\mathcal{V}^{p}_{F_{\cdot}}$ in $\varepsilon$ produces an element in $H_{y}^{p}(\Omega_{X/ \mathbb{Q}}^{p-1} )$, which can be represented by the following diagram
\[
\begin{cases}
 \begin{CD}
   F_{\bullet}(f_{1}, \cdots, f_{p}) @>>> O_{X, y}/(f_{1}, \cdots,  f_{p}) \\
  F_{p}(\cong O_{X,y}) @>[F_{\bullet}]_{loc}\rfloor  \dfrac{\partial}{\partial \varepsilon}\mid_{\varepsilon =0} >> F_{0} \otimes \Omega_{O_{X,y}/ \mathbb{Q}}^{p-1}(\cong\Omega_{O_{X,y}/ \mathbb{Q}}^{p-1}).
 \end{CD}
 \end{cases}
\]
For simplicity, assuming $g_{2}= \cdots = g_{p}=0$, we see that $[F_{\bullet}]_{loc}\rfloor  \dfrac{\partial}{\partial \varepsilon}\mid_{\varepsilon =0} = g_{1}df_{2} \wedge \cdots \wedge df_{p}$ and 
the truncation of $\mathcal{V}^{p}_{F_{\cdot}}$ in $\varepsilon$ is represented by the following diagram
\[
\begin{cases}
 \begin{CD}
   F_{\bullet}(f_{1}, \cdots, f_{p}) @>>> O_{X, y}/(f_{1}, \cdots,  f_{p}) \\
  F_{p}(\cong O_{X,y}) @>g_{1}df_{2} \wedge \cdots \wedge df_{p}>> F_{0} \otimes \Omega_{O_{X,y}/ \mathbb{Q}}^{p-1}(\cong\Omega_{O_{X,y}/ \mathbb{Q}}^{p-1}).
 \end{CD}
 \end{cases}
\]

Further concrete examples can be found in Chapter 7 of \cite{GGtangentspace}(page 90-91).

\section{\textbf{The map $\pi$}}

\begin{definition} \label{definition: composition}
We define a map from $\mathrm{T_{Y}Hilb}^{p}(X)$ to $H_{y}^{p}(\Omega_{X/\mathbb{Q}}^{p-1})$ by composing Ch in $\mathrm{Definition  ~\ref{definition: Chern}}$ with $\mu$ in $\mathrm{Definition ~\ref{definition: map1}}$:
\[
\pi :  \mathrm{T_{Y}Hilb}^{p}(X) \xrightarrow{\mu}   K^{M}_{0}(O_{X,y}[\varepsilon] \ \mathrm{on} \ y[\varepsilon]) \xrightarrow{\mathrm{Ch}} H_{y}^{p}(\Omega_{X/\mathbb{Q}}^{p-1}).
\]
\end{definition}

Recall that the Cousin complex of $\Omega_{X/ \mathbb{Q}}^{p-1}$ is of the form: 
 {\footnotesize
\begin{align*}
 0 \to  \Omega_{k(X)/ \mathbb{Q}}^{p-1} \to \cdots \to 
    \bigoplus_{y \in X^{(p)}} H_{y}^{p}(\Omega_{X/\mathbb{Q}}^{p-1}) \xrightarrow{\partial_{1}^{p,-p}} \bigoplus_{x \in X^{(p+1)}} H_{x}^{p+1}(\Omega_{X/\mathbb{Q}}^{p-1}) \to \cdots
\end{align*}
}
and the tangent space $TZ^{p}(X)$ is identified with $\mathrm{Ker}(\partial_{1}^{p,-p})$, see Theorem ~\ref{theorem: Yangtangent}.

For $p=d := \mathrm{dim}(X)$, $\partial_{1}^{d,-d} = 0$ because of dimensional reason. So $TZ^{d}(X) = \mathrm{Ker}(\partial_{1}^{d,-d})=\bigoplus\limits_{y \in X^{(d)}}H_{y}^{d}(\Omega_{X/\mathbb{Q}}^{d-1})$.
\begin{corollary}
For $p=d :=\mathrm{dim}(X)$, the map $\pi $ defines a map from $\mathrm{T_{Y}Hilb}^{d}(X)$ to $TZ^{d}(X)$ and it agrees with the map by $\mathrm{Green}$ and $\mathrm{Griffiths}$ in $\mathrm{Theorem ~\ref{theorem: GGVSHIlbert}}$.
\end{corollary}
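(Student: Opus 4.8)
The plan is to verify the two assertions of the corollary separately: first that for $p = d$ the composite $\pi$ does land in $TZ^{d}(X) = \bigoplus_{y \in X^{(d)}} H_{y}^{d}(\Omega_{X/\mathbb{Q}}^{d-1})$, and second that the resulting map $\mathrm{T}_{Y}\mathrm{Hilb}^{d}(X) \to TZ^{d}(X)$ coincides with Green and Griffiths' map from Theorem~\ref{theorem: GGVSHIlbert}. The first point is essentially immediate: $\pi$ by construction produces an element of $H_{y}^{p}(\Omega_{X/\mathbb{Q}}^{p-1})$ for $y$ the generic point of $Y$, and when $p = d$ this single summand already \emph{is} all of $TZ^{d}(X)$, since $X^{(d+1)} = \varnothing$ forces $\partial_{1}^{d,-d} = 0$ and hence $\mathrm{Ker}(\partial_{1}^{d,-d})$ is the whole group of the Cousin complex in that spot. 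So no kernel condition needs checking, and I would dispatch this in a sentence.

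The substance is the identification with the Green--Griffiths map. First I would recall, from Section~7.2 of \cite{GGtangentspace}, the explicit recipe Green and Griffiths use for $0$-cycles: a point $Y = \{y\}$ with $\mathfrak{m}_{y} = (f_{1}, \dots, f_{d})$ and a first-order deformation $Y'$ given by $(f_{1} + \varepsilon g_{1}, \dots, f_{d} + \varepsilon g_{d})$ is sent to a class in $H_{y}^{d}(\Omega_{X/\mathbb{Q}}^{d-1})$ built as an explicit ``residue'' or Grothendieck-symbol expression — in the normalized case $g_{2} = \cdots = g_{d} = 0$ this is exactly the symbol represented by $g_{1}\, df_{2} \wedge \cdots \wedge df_{d}$ against the Koszul resolution of $O_{X,y}/(f_{1},\dots,f_{d})$. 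Then I would invoke the computation already carried out in Section~\ref{Chern character} of this excerpt: there it is shown that $\mathrm{Ch} \circ \mu$ applied to $Y'$ is precisely the truncation in $\varepsilon$ of the Newton class $\mathcal{V}^{d}_{F_{\bullet}}$, and that this truncation is represented by the very same diagram with upper map $g_{1}\, df_{2} \wedge \cdots \wedge df_{d}$ (Ang\'enol--Lejeune-Jalabert's local fundamental class). Matching these two descriptions term by term — using multilinearity and the Leibniz rule to reduce the general $g_{i}$ case to the normalized one, and the basis-independence in Theorem~\ref{theorem: cycle} to see the class is well defined — gives the agreement.

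The step I expect to be the main obstacle is precisely this dictionary between the two formalisms: Green and Griffiths phrase their map in terms of a concrete Cousin/residue presentation of $H_{y}^{d}(\Omega_{X/\mathbb{Q}}^{d-1})$ (à la Hartshorne's local cohomology via \v{C}ech-type or Ext-type symbols), whereas the map $\pi$ here is defined through K-theory, the Chern character to negative cyclic homology, and the Ang\'enol--Lejeune-Jalabert trace; making the two identifications of $H_{y}^{d}$ literally the same requires pinning down normalizations (the $\frac{1}{p!}$ factor, signs in the Koszul differentials, the orientation of the trace map) so that the comparison is an equality and not merely an equality up to a universal nonzero scalar. I would handle this by first treating the normalized deformation $g_{2} = \cdots = g_{d} = 0$ — where both sides are visibly the class of $g_{1}\, df_{2}\wedge\cdots\wedge df_{d}$, as recorded at the end of Section~\ref{Chern character} — checking the constant there once and for all, and then extending to arbitrary $(g_{1},\dots,g_{d})$ by linearity of $\mu$ in the deformation data together with the alternating/Leibniz identities for the local fundamental class; the basis-independence of Theorem~\ref{theorem: cycle} guarantees the answer does not depend on the chosen regular system of parameters, which is what lets the comparison descend to the intrinsic maps.
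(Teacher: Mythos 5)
Your proposal is correct and follows exactly the route the paper intends (the paper states this corollary without proof): the first claim is the dimensional observation $\partial_{1}^{d,-d}=0$ made immediately before the corollary, and the second is the comparison of the explicit representative $g_{1}\,df_{2}\wedge\cdots\wedge df_{d}$ computed at the end of Section~\ref{Chern character} with the Green--Griffiths formulas on pages 90--91 of \cite{GGtangentspace}, which is precisely the reference the paper points to. Your additional remarks on normalizations and reduction to the case $g_{2}=\cdots=g_{d}=0$ are sensible details that the paper likewise leaves implicit.
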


We want to know, for general $p$,  whether this map $\pi$ defines a map from $\mathrm{T_{Y}Hilb}^{p}(X)$ to $TZ^{p}(X)$, as Green and Griffiths asked in Question ~\ref{question: comparetangent}.

\begin{remark}
In an email to the author,  $\mathrm{Christophe  \ Soul\acute{e} }$ suggested that he consider the image of suitable Koszul complexes under the Ch map in Definition \ref{definition: Chern}. This leads us to the following example, showing that $\pi$ does not define a map from $\mathrm{T_{Y}Hilb}^{p}(X)$ to $TZ^{p}(X)$ in general. The Koszul technique is also used in $\mathrm{Theorem ~\ref{theorem: mainTheorem}}$.

The author sincerely thanks $\mathrm{Christophe  \ Soul\acute{e}}$ for very helpful suggestions.

\end{remark}

\begin{example}  \label{example: trivialexample}
For $X$ be a smooth projective three-fold over a field $k$ of characteristic $0$, let $Y \subset X$ be a curve with generic point $y$. We assume a point
$x \in Y \subset X$ is defined by $(f,g,h)$ and $Y$ is generically defined by $(f,g)$, then $O_{X,y}=(O_{X,x})_{(f,g)}$.

We consider the infinitesimal deformation $Y'$ of $Y$ which is generically given by $(f+ \varepsilon \dfrac{1}{h}, g)$, where $\dfrac{1}{h} \in O_{X,y}= (O_{X,x})_{(f,g)} $(Note $\dfrac{1}{h} \notin O_{X,x}$), whose Koszul complex is of the form
\[
0 \to (O_{X,x})_{(f,g)}[\varepsilon]  \xrightarrow{(g, -f - \varepsilon \dfrac{1}{h})^{\mathrm{T}}}  (O_{X,x})_{(f,g)}^{\oplus 2}[\varepsilon]  \xrightarrow{(f + \varepsilon \dfrac{1}{h},g)}  (O_{X,x})_{(f,g)}[\varepsilon] \to 0,
\]
where $(-,-)^{\mathrm{T}}$ denotes transpose.

$\pi(Y') \in H^{2}_{y}(\Omega^{1}_{X/ \mathbb{Q}})$ is represented by the following diagram, 
\[
\begin{cases}
 \begin{CD}
   (O_{X,x})_{(f,g)}  @>>> (O_{X,x})_{(f,g)}^{\oplus 2} @>>>   (O_{X,x})_{(f,g)}  @>>> (O_{X,x})_{(f,g)} / (f,g) @>>> 0  \\
   (O_{X,x})_{(f,g)}  @>\frac{1}{h} dg >> \Omega^{1}_{(O_{X,x})_{(f,g)}/ \mathbb{Q}}.
 \end{CD}
 \end{cases}
\]

Let $F_{\bullet}(f,g,h)$ be the Koszul complex of $f,g,h$:
\[
0 \to O_{X,x} \to O^{\oplus 3}_{X,x} \to O^{\oplus 3}_{X,x} \to O_{X,x} \to 0,
\]
$\partial^{2,-2}_{1}(\pi(Y'))$ in $H^{3}_{x}(\Omega^{1}_{X/ \mathbb{Q}})$ is represented by the following diagram:
\[
\begin{cases}
 \begin{CD}
   F_{\bullet}(f,g,h) @>>> O_{X,x}/(f,g,h)   \\
   O_{X,x} @>1dg >> \Omega^{1}_{O_{X,x}/ \mathbb{Q}},
 \end{CD}
 \end{cases}
\]
which is not zero.

\end{example}

This example shows that, in general, 
 the image of $\pi$ may not lie in $TZ^{p}(X)$(the kernel of $\partial_{1}^{p,-p}$). However, we will show that, in Theorem ~\ref{theorem: mainTheorem} below,
 given $Y \subset X$ of codimension $p$ and $Y' \in \mathrm{T_{Y}Hilb}^{p}(X)$,
 there exists $Z \subset X$ of codimension $p$ and $Z' \in \mathrm{T_{Z}Hilb}^{p}(X)$ such that $\pi(Y')+\pi(Z')$ is a nontrivial element of  $TZ^{p}(X)$.

To fix notations, let $X$ be a smooth projective variety over a field $k$ of characteristic $0$ and 
let $Y \subset X$ be a subvariety of codimension $p$, with generic point $y$. Let $W \subset Y$ be a subvariety of codimension 1 in $Y $, with generic point $w$. One assumes $W$ is generically defined by $f_{1}, f_{2}, \cdots, f_{p}, f_{p+1}$ and $Y$ is generically defined by $f_{1}, f_{2}, \cdots, f_{p}$. So one  has $O_{X,y}=(O_{X,w})_{P}$, where $P$ is the ideal $(f_{1}, f_{2}, \cdots, f_{p}) \subset O_{X,w}$. 

$Y'$ is generically given by $(f_{1}+ \varepsilon g_{1}, f_{2}+ \varepsilon g_{2}, \cdots, f_{p}+ \varepsilon g_{p})$, where $g_{1}, \cdots, g_{p} \in O_{X,y}$.  We assume $g_{2}= \cdots = g_{p}=0$ in the following. Since  $O_{X,y}=(O_{X,w})_{P}$, we write $g_{1}= \dfrac{a}{b}$, where $a,b \in O_{X,w}$ and $b \notin P$. $b$ is either in or not in the maximal ideal $(f_{1}, f_{2}, \cdots, f_{p}, f_{p+1}) \subset O_{X,w}$.
\begin{lemma}  \label{lemma: trivialdeform}
If $b \notin (f_{1}, f_{2}, \cdots, f_{p}, f_{p+1})$, then $\partial_{1}^{p,-p}(\pi(Y'))=0$.
\end{lemma}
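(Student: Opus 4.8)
By Lemma~\ref{theorem: describe} and the computation following it, $\pi(Y')\in H^{p}_{y}(\Omega^{p-1}_{X/\mathbb{Q}})$ is (with $g_{2}=\dots=g_{p}=0$) the image under $\mathrm{Tr}$ and the $\varepsilon$-truncation of the local fundamental class of the Koszul complex $F_{\bullet}(f_{1}+\varepsilon g_{1},f_{2},\dots,f_{p})$ over $O_{X,y}[\varepsilon]$, whose relevant entry is $g_{1}\,df_{2}\wedge\dots\wedge df_{p}$. As in Example~\ref{example: trivialexample}, the assertion $\partial_{1}^{p,-p}(\pi(Y'))=0$ is the statement that the component of $\partial_{1}^{p,-p}(\pi(Y'))$ lying in $H^{p+1}_{w}(\Omega^{p-1}_{X/\mathbb{Q}})$ vanishes. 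I would work over $S:=\mathrm{Spec}(O_{X,w})$, where $y$ is the generic point of the one-dimensional regular local subscheme $\overline{\{y\}}=V(f_{1},\dots,f_{p})$ and $w$ its closed point. Because $\Omega^{p-1}_{X/\mathbb{Q}}$ is Cohen--Macaulay for the codimension filtration (the fact underlying Theorem~\ref{theorem: Yangtangent}), the relevant part of the local-cohomology long exact sequence for $\{w\}\subset\overline{\{y\}}$ takes the form
\[
0\to H^{p}_{\overline{\{y\}}}(S,\Omega^{p-1}_{X/\mathbb{Q}})\to H^{p}_{y}(\Omega^{p-1}_{X/\mathbb{Q}})\xrightarrow{\ \partial_{1}^{p,-p}\ } H^{p+1}_{w}(\Omega^{p-1}_{X/\mathbb{Q}}),
\]
the second arrow being precisely the $(y,w)$-component of the Cousin differential. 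So it suffices to show that $\pi(Y')$ lifts to $H^{p}_{\overline{\{y\}}}(S,\Omega^{p-1}_{X/\mathbb{Q}})$.

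\textbf{Building the lift.} Here the hypothesis enters. Since $O_{X,w}$ is regular local with maximal ideal $(f_{1},\dots,f_{p+1})$, the condition $b\notin(f_{1},\dots,f_{p+1})$ forces $b$ to be a unit in $O_{X,w}$, hence $g_{1}=a/b\in O_{X,w}$; thus all of $f_{1}+\varepsilon g_{1},f_{2},\dots,f_{p}$ already lie in $O_{X,w}[\varepsilon]$, and they form a regular sequence there, by the same check as in Section~\ref{K-theory and tangent spaces to Hilbert schemes} (their reduction mod $\varepsilon$ is the regular sequence $f_{1},\dots,f_{p}$ of $O_{X,w}$). Let $\widetilde{F}_{\bullet}:=F_{\bullet}(f_{1}+\varepsilon g_{1},f_{2},\dots,f_{p})$ be the resulting Koszul complex over $O_{X,w}[\varepsilon]$, a strict perfect complex with homology supported on $\overline{\{y\}}$. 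Running the Ang\'enoil and Lejeune-Jalabert construction of Section~\ref{Chern character} on $\widetilde{F}_{\bullet}$ --- forming $[\widetilde{F}_{\bullet}]_{loc}$, applying $\mathrm{Tr}$, and truncating by $\rfloor\frac{\partial}{\partial\varepsilon}\mid_{\varepsilon=0}$ --- produces a class $\widetilde{\pi}(Y')\in H^{p}_{\overline{\{y\}}}(S,\Omega^{p-1}_{X/\mathbb{Q}})$ which, because each of these three operations commutes with localization, restricts at the generic point $y$ to $\mathrm{Ch}(\mu(Y'))=\pi(Y')$. This is exactly the lift required by the Strategy paragraph, so the lemma follows. (If moreover $g_{1}$ is regular at every other codimension-one point of $Y$ --- which is so for the deformations actually used in Theorem~\ref{theorem: mainTheorem} --- then the full tuple $\partial_{1}^{p,-p}(\pi(Y'))$ vanishes by running the same argument at each such point, the contributions from codimension-$(p+1)$ points off $Y$ being zero for support reasons.)

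\textbf{K-theoretic reformulation.} The same mechanism has a cleaner structural form: $\widetilde{F}_{\bullet}$ is a genuine object of $\mathcal{L}_{(-p)}(S[\varepsilon])$ (the Krull codimension of its support is exactly $p$), so by Theorem~\ref{theorem: Balmer theorem} applied to $\mathrm{Spec}(O_{X,w}[\varepsilon])$ its class in $K_{0}((\mathcal{L}_{(-p)}(S[\varepsilon])/\mathcal{L}_{(-p-1)}(S[\varepsilon]))^{\#})$ is concentrated at $y[\varepsilon]$ and equal there to $\mu(Y')$; thus $\mu(Y')$ lifts to $K_{0}(\mathcal{L}_{(-p)}(S[\varepsilon]))$. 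Since $\partial_{1}^{p,-p}$ corresponds, via the Chern-character isomorphisms of Theorem~\ref{theorem:Chernisomorphism}, to the $d_{1}$-differential of the coniveau filtration --- this is the identification behind Theorem~\ref{theorem: Yangtangent} --- and $d_{1}$ factors through the connecting map $K_{0}(\mathcal{L}_{(-p)}/\mathcal{L}_{(-p-1)})\to K_{-1}(\mathcal{L}_{(-p-1)})$ of the Thomason--Trobaugh localization sequence, a class lifting to $K_{0}(\mathcal{L}_{(-p)})$ is annihilated; hence $\partial_{1}^{p,-p}(\pi(Y'))=0$ once more.

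\textbf{Main obstacle.} The first two steps are routine once the hypothesis is unwound. The delicate point, which in practice I would cite from \cite{Y-3}, is the identification invoked above: that the $(y,w)$-component of the Cousin differential of $\Omega^{p-1}_{X/\mathbb{Q}}$ is the connecting map of the local-cohomology sequence for $\{w\}\subset\overline{\{y\}}$ over $\mathrm{Spec}(O_{X,w})$ --- equivalently the transported K-theoretic $d_{1}$ --- and that this component may be computed after localizing at $w$. Granting this compatibility, the proof is just the slogan ``a local fundamental class defined in a neighbourhood of $w$ has no residue at $w$''.
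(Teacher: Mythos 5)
Your argument is correct and reaches the paper's conclusion by a genuinely different route. The paper's proof is an explicit Koszul computation: since $b$ is a unit in $O_{X,w}$, one has $g_{1}\in O_{X,w}$, and the author artificially writes $g_{1}=\dfrac{g_{1}f_{p+1}}{f_{p+1}}$ so that the component of $\partial_{1}^{p,-p}(\pi(Y'))$ at $w$ is represented on the Koszul complex $F_{\bullet}(f_{1},\dots,f_{p},f_{p+1})$ by $g_{1}f_{p+1}\,df_{2}\wedge\cdots\wedge df_{p}$; because the factor $f_{p+1}$ occurs in the top Koszul differential $A_{p+1}$, this cocycle is a coboundary in $\mathrm{Ext}_{O_{X,w}}^{p+1}(O_{X,w}/(f_{1},\dots,f_{p+1}),\Omega_{O_{X,w}/\mathbb{Q}}^{p-1})$, hence zero. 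You instead argue structurally: the integrality of $g_{1}$ at $w$ lets you run the Ang\'eniol--Lejeune-Jalabert construction over $O_{X,w}[\varepsilon]$ itself, producing a lift of $\pi(Y')$ to $H^{p}_{\overline{\{y\}}}(\mathrm{Spec}(O_{X,w}),\Omega^{p-1}_{X/\mathbb{Q}})$, which is then killed by the connecting map to $H^{p+1}_{w}$ by exactness of the local-cohomology sequence (equivalently, in your reformulation, by the K-theoretic localization sequence applied to $K_{0}(\mathcal{L}_{(-p)})$). The two arguments are two faces of the same fact --- your lift is precisely what exhibits the paper's cocycle as a coboundary --- but yours buys something: it does not depend on the simplifying assumption $g_{2}=\cdots=g_{p}=0$, and it isolates the real input as ``$g_{1}$ regular at $w$ implies no residue at $w$.'' The cost is the compatibility you yourself flag as the main obstacle, namely that the $(y,w)$-component of the Cousin differential of $\Omega^{p-1}_{X/\mathbb{Q}}$ is the connecting map of the local-cohomology sequence for $\{w\}\subset\overline{\{y\}}$ computed over $O_{X,w}$; the paper sidesteps any abstract appeal to this by computing that component directly on Koszul representatives. (You are also right that, strictly, both proofs address only the component of $\partial_{1}^{p,-p}(\pi(Y'))$ at the fixed codimension-one point $w$ of $Y$; your parenthetical remark on the remaining components is a sensible addendum that the paper leaves implicit.)
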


\begin{proof}
If $b \notin (f_{1}, f_{2}, \cdots, f_{p}, f_{p+1})$, then $b$ is a unit in $O_{X,w}$, this says $g_{1}= \dfrac{a}{b} \in O_{X,w}$.
Then $\pi(Y')$ is represented by the following diagram
\[
\begin{cases}
 \begin{CD}
   F_{\bullet}(f_{1}, f_{2},\cdots, f_{p}) @>>> (O_{X,w})_{P}/(f_{1}, f_{2}, \cdots,  f_{p}) \\
  F_{p}(\cong (O_{X,w})_{P}) @>  g_{1}df_{2} \wedge \cdots \wedge df_{p}>> F_{0} \otimes \Omega_{(O_{X,w})_{P}/ \mathbb{Q}}^{p-1}(\cong\Omega_{(O_{X,w})_{P}/ \mathbb{Q}}^{p-1}).
 \end{CD}
 \end{cases}
\]
Here, $F_{\bullet}(f_{1}, f_{2},\cdots, f_{p})$ is of the form
\[
 \begin{CD}
  0 @>>> F_{p} @>A_{p}>> F_{p-1} @>A_{p-1}>>  \dots @>A_{2}>> F_{1} @>A_{1}>> F_{0},
 \end{CD}
\]
where each $F_{i}=\bigwedge^{i}((O_{X,w})_{P})^{\oplus p}$.
Since $f_{p+1} \notin P$, $f_{p+1}^{-1}$ exists in $(O_{X,w})_{P}$, we can write $g_{1}df_{2} \wedge \cdots \wedge df_{p}= \dfrac{g_{1} f_{p+1}}{f_{p+1}}df_{2} \wedge \cdots \wedge df_{p}$.

$\partial_{1}^{p,-p}(\pi(Y'))$ is represented by the following diagram
\[
\begin{cases}
 \begin{CD}
   F_{\bullet}(f_{1}, f_{2},\cdots, f_{p}, f_{p+1}) @>>> O_{X, w}/(f_{1}, f_{2}, \cdots,  f_{p}, f_{p+1}) \\
  F_{p+1}(\cong O_{X,w}) @>  g_{1} f_{p+1}df_{2} \wedge \cdots \wedge df_{p}>> F_{0} \otimes \Omega_{O_{X,w}/ \mathbb{Q}}^{p-1}(\cong\Omega_{O_{X,w}/ \mathbb{Q}}^{p-1}).
 \end{CD}
 \end{cases}
\]
The complex $F_{\bullet}(f_{1}, f_{2},\cdots, f_{p}, f_{p+1})$ is of the form
\[
 \begin{CD}
  0 @>>>  \bigwedge^{p+1}(O_{X, w})^{\oplus p+1} @>A_{p+1}>> \bigwedge^{p}(O_{X, w})^{\oplus p+1} @>>>  \cdots.
 \end{CD}
\]
 Let $\{e_{1}, \cdots, e_{p+1} \}$ be a basis of $(O_{X, w})^{\oplus p+1} $, the map $A_{p+1}$ is 
\[
 e_{1}\wedge \cdots \wedge e_{p+1}  \to \sum^{p+1}_{j=1}(-1)^{j}f_{j}e_{1}\wedge \cdots \wedge \hat{e_{j}} \wedge \cdots e_{p+1},
\]
where $\hat{e_{j}}$ means to omit the $j^{th}$ term.

Since $f_{p+1}$ appears in $A_{p+1}$, 
\[
 g_{1} f_{p+1}df_{2} \wedge \cdots \wedge df_{p} \equiv 0 \in Ext_{O_{X,w}}^{p+1}(O_{X, w}/(f_{1}, \cdots,  f_{p}, f_{p+1}), \Omega_{O_{X,w}/ \mathbb{Q}}^{p-1}),
 \]
 $\partial_{1}^{p,-p}(\pi(Y'))=0$.

\end{proof}

This lemma doesn't contradict Example ~\ref{example: trivialexample} above, where $h \in (f,g,h) \subset O_{X,x}$.

\begin{theorem} \label{theorem: mainTheorem}
For $Y' \in \mathrm{T_{Y}Hilb}^{p}(X)$ which is generically defined by $(f_{1}+ \varepsilon g_{1}, f_{2}, \cdots, f_{p})$, where $g_{1}= \dfrac{a}{b} \in O_{X,y}=(O_{X,w})_{P}$, 

\begin{itemize}
\item Case 1: if $b \notin (f_{1}, f_{2}, \cdots, f_{p}, f_{p+1})$, then $\pi(Y') \in TZ^{p}(X)$, i.e., $\partial_{1}^{p,-p}(\pi(Y'))=0$. \\

\item Case 2: if $b \in (f_{1}, f_{2}, \cdots, f_{p}, f_{p+1})$, 
there exists $Z \subset X$ of codimension $p$ and exists $Z' \in \mathrm{T_{Z}Hilb}^{p}(X)$ such that $\pi(Y')+\pi(Z') \in TZ^{p}(X)$, i.e., $\partial_{1}^{p,-p}(\pi(Y')+\pi(Z'))=0$.

\end{itemize}

\end{theorem}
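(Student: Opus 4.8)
Case~1 is exactly Lemma~\ref{lemma: trivialdeform}, so only Case~2 needs work. The hypothesis $b\in(f_{1},\dots,f_{p},f_{p+1})=\mathfrak{m}_{w}$ says precisely that $g_{1}=a/b$ has a pole along $W$, which is the source of a possibly nonzero boundary at $w$ (cf. Example~\ref{example: trivialexample}). The plan is to \emph{clear the denominator}: multiplying the first Koszul generator $f_{1}+\varepsilon g_{1}$ by the unit $b\in O_{X,y}$ gives $bf_{1}+\varepsilon a$, whose entries now lie in $O_{X,w}[\varepsilon]$. So, on a small affine $U\ni w$ on which $a,b,f_{1},\dots,f_{p}$ are regular, I would form the closed subscheme
\[
\mathcal Y:=\mathrm{Spec}\bigl(O_{U}[\varepsilon]/(bf_{1}+\varepsilon a,\,f_{2},\dots,f_{p})\bigr)\subset U[\varepsilon].
\]

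Next I would record the basic properties of $\mathcal Y$. Here $b\notin P$ is used: since $(f_{2},\dots,f_{p})$ is prime and contains neither $f_{1}$ (else $P=(f_{2},\dots,f_{p})$) nor $b$ (else $b\in P$), the locus $V(bf_{1},f_{2},\dots,f_{p})=V(f_{1},\dots,f_{p})\cup V(b,f_{2},\dots,f_{p})$ is a union of two codimension-$p$ subvarieties, so $bf_{1},f_{2},\dots,f_{p}$ is a regular sequence. Lifting it along $\varepsilon$ (as in the remark in Section~\ref{K-theory and tangent spaces to Hilbert schemes} for $f_{i}+\varepsilon g_{i}$), the sequence $bf_{1}+\varepsilon a,f_{2},\dots,f_{p}$ is again regular in $O_{U}[\varepsilon]$; its reduction mod $\varepsilon$ being a regular sequence, a standard Koszul base-change computation gives that $O_{\mathcal Y}$ is flat over $k[\varepsilon]$ with $\mathcal Y\otimes_{k[\varepsilon]}k=Y\cup Z$, where $Z:=V(b,f_{2},\dots,f_{p})$ has codimension $p$ (take its reduced components if an honest subvariety is wanted). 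I would also note that $f_{1}$ is a unit at every generic point $z$ of $Z$: if $f_{1}\in Q$ for such a minimal prime $Q$ of $(b,f_{2},\dots,f_{p})$ then $P\subsetneq Q$ (as $b\in Q\setminus P$), contradicting $\mathrm{ht}\,Q=p=\mathrm{ht}\,P$.

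Now I would feed $\mathcal Y$ into the $K$-theory formalism of Section~\ref{K-theory and tangent spaces to Hilbert schemes}. Because $O_{\mathcal Y}$ is $k[\varepsilon]$-flat and $X$ is regular, $i_{\ast}O_{\mathcal Y}$ is a perfect complex of codimension-$p$ homological support, hence defines a class in $K_{0}(\mathcal L_{(-p)}(U[\varepsilon]))$; by Theorem~\ref{theorem: Balmer theorem} its image in $K_{0}\bigl((\mathcal L_{(-p)}(U[\varepsilon])/\mathcal L_{(-p-1)}(U[\varepsilon]))^{\#}\bigr)$ is the tuple of localisations at the generic points of $Y\cup Z$: at $y$ the unit $b$ makes $bf_{1}+\varepsilon a$ proportional to $f_{1}+\varepsilon g_{1}$, yielding $F_{\bullet}(f_{1}+\varepsilon g_{1},f_{2},\dots,f_{p})=\mu(Y')$; at $z$ the unit $f_{1}$ makes it proportional to $b+\varepsilon\,\frac{a}{f_{1}}$, yielding $F_{\bullet}(b+\varepsilon\,\frac{a}{f_{1}},f_{2},\dots,f_{p})=\mu(Z')$ for the induced deformation $Z'\in\mathrm{T_{Z}Hilb}^{p}(X)$ of $Z$. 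Thus $\mu(Y')+\mu(Z')$ lies in the image of $K_{0}(\mathcal L_{(-p)}(U[\varepsilon]))$, so it is annihilated by the connecting map to $K_{-1}(\mathcal L_{(-p-1)}(U[\varepsilon]))$ attached to the localization triangle $\mathcal L_{(-p-1)}\to\mathcal L_{(-p)}\to\mathcal L_{(-p)}/\mathcal L_{(-p-1)}$, i.e. by the first differential of the coniveau (Cousin) complex in $K$-theory. Applying $\mathrm{Ch}$, using $\pi=\mathrm{Ch}\circ\mu$ and the fact that under the isomorphisms of Theorem~\ref{theorem:Chernisomorphism} this $K$-theoretic differential is identified with $\partial_{1}^{p,-p}$ of $\Omega_{X/\mathbb Q}^{p-1}$ (the identification underlying Theorem~\ref{theorem: Yangtangent}), I conclude $\partial_{1}^{p,-p}(\pi(Y')+\pi(Z'))=0$, i.e. $\pi(Y')+\pi(Z')\in TZ^{p}(X)$. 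The construction is carried out at the divisor $W$; the global statement follows by repeating it at each codimension-one subvariety of $Y$ along which $g_{1}$ has a pole and adding the resulting $Z'$.

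The step I expect to be the real obstacle is the compatibility of $\mathrm{Ch}$ with the two differentials involved — the $K$-theoretic connecting map from the localization triangles of the $\mathcal L_{(i)}$ and $\partial_{1}^{p,-p}$ on $\Omega_{X/\mathbb Q}^{p-1}$ — together with verifying that the passage to idempotent completions in Theorem~\ref{theorem: Balmer theorem} does not disturb it; this is the technical heart and is essentially the content of \cite{Y-3}. A secondary point is checking that $i_{\ast}O_{\mathcal Y}$ has no codimension-$p$ homological support beyond $Y\cup Z$, so that its image in $\bigoplus_{y}K_{0}^{M}$ is exactly $\mu(Y')+\mu(Z')$; this is routine given the regular-sequence analysis. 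Alternatively, Case~2 can be checked by hand: represent $\partial_{1}^{p,-p}(\pi(Y'))$ and $\partial_{1}^{p,-p}(\pi(Z'))$ at $w$ by the Ang\'enoil--Lejeune-Jalabert diagrams of Section~\ref{Chern character} and see that they cancel — immediate when $b$ vanishes to order one along $W$, and only bookkeeping otherwise.
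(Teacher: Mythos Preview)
Your proposal is correct but follows a genuinely different route from the paper.

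The paper argues Case~2 by a direct explicit computation in the Ang\'eniol--Lejeune-Jalabert formalism of Section~\ref{Chern character}. It first reduces $b$ to $f_{p+1}$ modulo $P$ (writing $b=\sum a_if_i+f_{p+1}$ and noting that the $f_i$-terms disappear in the Ext class), then defines $Z$ generically by $(f_{p+1},f_2,\dots,f_p)$ and $Z'$ by $(f_{p+1}+\varepsilon\,a/f_1,f_2,\dots,f_p)$. It writes out $\partial_1^{p,-p}(\pi(Y'))$ and $\partial_1^{p,-p}(\pi(Z'))$ as classes in $\mathrm{Ext}^{p+1}_{O_{X,w}}$ represented by the Koszul resolutions $F_\bullet(f_1,\dots,f_{p+1})$ and $F_\bullet(f_{p+1},f_2,\dots,f_p,f_1)$ respectively, both carrying the same form $a\,df_2\wedge\cdots\wedge df_p$. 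The two Koszul complexes are related by a permutation matrix of determinant $-1$, forcing $\partial_1^{p,-p}(\pi(Z'))=-\partial_1^{p,-p}(\pi(Y'))$.

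Your argument is more structural: you build a single flat family $\mathcal Y=V(bf_1+\varepsilon a,f_2,\dots,f_p)$ whose special fibre is $Y\cup Z$, observe that $i_\ast O_{\mathcal Y}$ furnishes a \emph{global} class in $K_0(\mathcal L_{(-p)}(U[\varepsilon]))$ whose localisations at $y$ and $z$ are $\mu(Y')$ and $\mu(Z')$, and conclude vanishing of the boundary from the localisation long exact sequence before passing to $\Omega^{p-1}$ via $\mathrm{Ch}$. Your $Z=V(b,f_2,\dots,f_p)$ is the natural choice and avoids the paper's preliminary reduction of $b$ to $f_{p+1}$. The trade-off is that you must import the compatibility of $\mathrm{Ch}$ with the $K$-theoretic and Cousin differentials (established in \cite{Y-2}, as you note), whereas the paper's computation is self-contained given Section~\ref{Chern character}; on the other hand your argument explains \emph{why} the cancellation occurs --- the sum lifts to a global perfect complex --- while the paper only verifies it. Your closing ``alternatively, by hand'' remark is exactly the paper's proof.
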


\begin{proof}
Case 1 is Lemma ~\ref{lemma: trivialdeform}.
Now, we consider the case $b \in (f_{1}, f_{2}, \cdots, f_{p}, f_{p+1})$. Since $b \notin (f_{1}, f_{2}, \cdots, f_{p})$,
we can write  $b= \sum_{i=1}^{p} a_{i}f_{i}^{n_{i}} + a_{p+1}f_{p+1}^{n_{p+1}}$, where $a_{p+1}$ is a unit in 
$O_{X,w}$ and each $n_{j}$ is some integer. For simplicity, we assume each $n_{j} =1$ and $a_{p+1}=1$.

Since $Y'$ is generically given by $(f_{1}+ \varepsilon g_{1}, f_{2}, \cdots, f_{p})$, then $\pi(Y')$ is represented by the following diagram( $g_{1}= \dfrac{a}{b}$):
\[
\begin{cases}
 \begin{CD}
   F_{\bullet}(f_{1}, f_{2},\cdots, f_{p}) @>>> (O_{X,w})_{P}/(f_{1}, f_{2}, \cdots,  f_{p}) \\
  F_{p}(\cong (O_{X,w})_{P}) @> \dfrac{a}{b} df_{2} \wedge \cdots \wedge df_{p}>> F_{0} \otimes \Omega_{(O_{X,w})_{P}/ \mathbb{Q}}^{p-1}(\cong\Omega_{(O_{X,w})_{P}/ \mathbb{Q}}^{p-1}).
 \end{CD}
 \end{cases}
\]
 Here, $F_{\bullet}(f_{1}, f_{2},\cdots, f_{p})$ is of the form
\[
 \begin{CD}
  0 @>>> F_{p} @>A_{p}>> F_{p-1} @>A_{p-1}>>  \dots @>A_{2}>> F_{1} @>A_{1}>> F_{0},
 \end{CD}
\]
where each $F_{i}=\bigwedge^{i}((O_{X,w})_{P})^{\oplus p}$.
Let $\{e_{1}, \cdots, e_{p} \}$ be a basis of $(O_{X, w})^{\oplus p} $, the map $A_{p}$ is 
\[
 e_{1}\wedge \cdots \wedge e_{p}  \to \sum^{p}_{j=1}(-1)^{j}f_{j}e_{1}\wedge \cdots \wedge \hat{e_{j}} \wedge \cdots e_{p},
\]
where $\hat{e_{j}}$ means to omit the $j^{th}$ term.
 
Noting $\dfrac{1}{b}-\dfrac{1}{f_{p+1}}=\dfrac{-\sum_{i=1}^{p}a_{i}f_{i}}{bf_{p+1}}$ and each $f_{i}$($i=1, \cdots, p$) appears in $A_{p}$, the above diagram representing $\pi(Y')$ can be replaced by the following one:
\[
\begin{cases}
 \begin{CD}
   F_{\bullet}(f_{1}, f_{2},\cdots, f_{p}) @>>> (O_{X,w})_{P}/(f_{1}, f_{2}, \cdots,  f_{p}) \\
  F_{p}(\cong (O_{X,w})_{P}) @> \dfrac{a}{f_{p+1}} df_{2} \wedge \cdots \wedge df_{p}>> F_{0} \otimes \Omega_{(O_{X,w})_{P}/ \mathbb{Q}}^{p-1}(\cong\Omega_{(O_{X,w})_{P}/ \mathbb{Q}}^{p-1}).
 \end{CD}
 \end{cases}
\]
Then $\partial_{1}^{p,-p}(\pi(Y'))$ is represented by the following diagram:
\[
\begin{cases}
 \begin{CD}
   F_{\bullet}(f_{1}, f_{2},\cdots, f_{p}, f_{p+1}) @>>> O_{X,w}/(f_{1}, f_{2}, \cdots,  f_{p}, f_{p+1}) \\
  F_{p+1}(\cong O_{X,w}) @> a df_{2} \wedge \cdots \wedge df_{p}>> F_{0} \otimes \Omega_{O_{X,w}/ \mathbb{Q}}^{p-1}(\cong\Omega_{O_{X,w}/ \mathbb{Q}}^{p-1}).
 \end{CD}
 \end{cases}
\]

Let $P'$ denote the prime $(f_{p+1}, f_{2}, \cdots, f_{p}) \subset O_{X,w}$ ,
then $P'$ defines a generic point $z \in X^{(p)}$ and one has $O_{X,z} = (O_{X,w})_{P'}$.
We define the subscheme
\[
Z := \overline{\{ z \}}.
\]

Let $Z'$ be a first order infinitesimal deformation of $Z$, which is generically given by $(f_{p+1}+ \varepsilon \dfrac{a}{f_{1}}, f_{2}, \cdots, f_{p})$.
$\pi(Z')$ is represented by the following diagram:
\[
\begin{cases}
 \begin{CD}
   F_{\bullet}(f_{p+1}, f_{2},\cdots, f_{p}) @>>> (O_{X, w})_{P'}/(f_{p+1}, f_{2}, \cdots,  f_{p}) \\
  F_{p}(\cong (O_{X,w})_{P'}) @> \dfrac{a}{f_{1}} df_{2} \wedge \cdots \wedge df_{p}>> F_{0} \otimes \Omega_{(O_{X,w})_{P'}/ \mathbb{Q}}^{p-1}(\cong\Omega_{(O_{X,w})_{P'}/ \mathbb{Q}}^{p-1}),
 \end{CD}
 \end{cases}
\]
and $\partial_{1}^{p,-p}(\pi(Z'))$  is represented by the following diagram:
\[
\begin{cases}
 \begin{CD}
   F_{\bullet}(f_{p+1}, f_{2},\cdots, f_{p}, f_{1}) @>>> O_{X, w}/(f_{p+1}, f_{2}, \cdots,  f_{p}, f_{1}) \\
  F_{p+1}(\cong O_{X,w}) @> a df_{2} \wedge \cdots \wedge df_{p}>> F_{0} \otimes \Omega_{O_{X,w}/ \mathbb{Q}}^{p-1}(\cong\Omega_{O_{X,w}/ \mathbb{Q}}^{p-1}).
 \end{CD}
 \end{cases}
\]
Here, $F_{\bullet}(f_{1}, f_{2},\cdots, f_{p}, f_{p+1}) $ and $F_{\bullet}(f_{p+1}, f_{2},\cdots, f_{p}, f_{1}) $ are Koszul resolutions of $O_{X, w}/(f_{1}, f_{2}, \cdots,  f_{p}, f_{p+1})$ and $O_{X, w}/(f_{p+1}, f_{2}, \cdots,  f_{p},f_{1})$ respectively.

These two Koszul complexes $F_{\bullet}(f_{1}, f_{2},\cdots, f_{p}, f_{p+1}) $ and $F_{\bullet}(f_{p+1}, f_{2},\cdots, f_{p}, f_{1}) $ are related by the following commutative diagram, see page 691 of \cite{GH},
\[
\begin{CD}
O_{X,w} @>D_{p+1}>> \wedge^{p}O_{X,w}^{\oplus p+1} @>D_{p}>> \cdots @>>> O_{X,w}^{\oplus p+1} @>D_{1}>> O_{X,w} \\
   @V\mathrm{det}A_{1}VV @V\wedge^{p}A_{1}VV @VVV  @VA_{1}VV @V=VV \\
  O_{X,w} @>E_{p+1}>> \wedge^{p}O_{X,w}^{\oplus p+1} @>E_{p}>> \cdots @>>> O_{X,w}^{\oplus p+1} @>E_{1}>> O_{X,w},
 \end{CD}
\]
where each $D_{i}$ and $E_{i}$ are defined as usual. In particular, $D_{1}=(f_{1}, f_{2},\cdots, f_{p}, f_{p+1})$, 
$E_{1}=(f_{p+1}, f_{2},\cdots, f_{p}, f_{1})$,  and $A_{1}$ is the matrix:
\[
  \left(\begin{array}{cccc}
0 & 0& 0& \cdots 1\\
0 & 1 & 0 & \cdots 0 \\
0 & 0 & 1 & \cdots 0 \\
\hdotsfor{4} \\
1 & 0 & 0 & \cdots 0
  \end{array}\right).
\]

Since $\mathrm{det}A_{1}=-1$, one has
\[
\partial_{1}^{p,-p}(\pi(Z')) = - \partial_{1}^{p,-p}(\pi(Y'))  \in Ext_{O_{X,w}}^{p+1}(O_{X, w}/(f_{1}, f_{2}, \cdots,  f_{p}, f_{p+1}), \Omega_{O_{X,w}/ \mathbb{Q}}^{p-1}),
\]
consequently, $\partial_{1}^{p,-p}(\pi(Z')+\pi(Y'))=0 \in H^{p+1}_{w}(\Omega_{O_{X,w}/ \mathbb{Q}}^{p-1})$. In other words, 
\[
\pi(Z')+\pi(Y') \in TZ^{p}(X).
\]
\end{proof}

There exists the following commutative diagram, 
which is part of the commutative diagram of  Theorem 3.14 in \cite{Y-2}(taking j=1)
\[
\begin{CD}
 \bigoplus\limits_{x \in X^{(p)}}H_{x}^{p}(\Omega_{X/\mathbb{Q}}^{p-1}) @<\mathrm{Ch}<< \bigoplus\limits_{x[\varepsilon] \in X[\varepsilon] ^{(p)}}K^{M}_{0}(O_{X,x}[\varepsilon] \ \mathrm{on} \ x[\varepsilon]) \\
     @V\partial_{1}^{p,-p}VV @Vd_{1,X[\varepsilon]}^{p,-p}VV \\
     \bigoplus\limits_{x \in X^{(p+1)}}H_{x}^{p+1}(\Omega_{X/\mathbb{Q}}^{p-1}) @<\mathrm{Ch}<\cong< \bigoplus\limits_{x[\varepsilon] \in X[\varepsilon] ^{(p+1)}}K^{M}_{-1}(O_{X,x}[\varepsilon] \ \mathrm{on} \ x[\varepsilon]).
      \end{CD}
      \]

For $Y' \in \mathrm{T_{Y}Hilb}^{p}(X)$ which is generically defined by $(f_{1}+ \varepsilon g_{1}, f_{2}, \cdots, f_{p})$, where $g_{1}= \dfrac{a}{b} \in O_{X,y}=(O_{X,w})_{P}$, we use $F_{\bullet}(f_{1}+ \varepsilon g_{1}, f_{2}, \cdots, f_{p})$ to denote the Koszul complex associated to $f_{1}+ \varepsilon g_{1}, f_{2}, \cdots, f_{p}$. Theorem \ref{theorem: mainTheorem} implies the following.

 Case 1: if $b \notin (f_{1}, f_{2}, \cdots, f_{p}, f_{p+1})$, $\partial_{1}^{p,-p}(\pi(Y'))=0$. The commutative diagram
 \[
\begin{CD}
 \pi(Y') @<\mathrm{Ch}<<   F_{\bullet}(f_{1}+ \varepsilon g_{1}, f_{2}, \cdots, f_{p})\\
     @V\partial_{1}^{p,-p}VV @Vd_{1,X[\varepsilon]}^{p,-p}VV \\
    0  @< \mathrm{Ch}<\cong <  d_{1,X[\varepsilon]}^{p,-p}(F_{\bullet}(f_{1}+ \varepsilon g_{1}, f_{2}, \cdots, f_{p})).
      \end{CD}
      \]
 says $d_{1,X[\varepsilon]}^{p,-p}(F_{\bullet}(f_{1}+ \varepsilon g_{1}, f_{2}, \cdots, f_{p})) =0 $.
 
 Case 2: if $b \in (f_{1}, f_{2}, \cdots, f_{p}, f_{p+1})$, we are reduced to considering $b=f_{p+1}$. Then there exists $Z \subset X$ which is generically defined by $(f_{p+1}, f_{2}, \cdots, f_{p})$ and exists $Z' \in \mathrm{T_{Z}Hilb}^{p}(X)$ which is generically defined by $(f_{p+1}+ \varepsilon \dfrac{a}{f_{1}}, f_{2}, \cdots, f_{p})$
such that $\partial_{1}^{p,-p}(\pi(Y')+\pi(Z'))=0$. We use $F_{\bullet}(f_{p+1}+ \varepsilon \dfrac{a}{f_{1}}, f_{2}, \cdots, f_{p})$ to denote the Koszul complex associated to $f_{p+1}+ \varepsilon \dfrac{a}{f_{1}}, f_{2}, \cdots, f_{p}$.

The commutative diagram
 \[
\begin{CD}
 \pi(Y') + \pi(Z') @<\mathrm{Ch}<<   F_{\bullet}(f_{1}+ \varepsilon \dfrac{a}{f_{p+1}}, f_{2}, \cdots, f_{p}) + F_{\bullet}(f_{p+1}+ \varepsilon \dfrac{a}{f_{1}}, f_{2}, \cdots, f_{p})\\
     @V\partial_{1}^{p,-p}VV @Vd_{1,X[\varepsilon]}^{p,-p}VV \\
    0  @< \mathrm{Ch}<\cong <  d_{1,X[\varepsilon]}^{p,-p}(F_{\bullet}(f_{1}+ \varepsilon \dfrac{a}{f_{p+1}}, f_{2}, \cdots, f_{p})+F_{\bullet}(f_{p+1}+ \varepsilon \dfrac{a}{f_{1}}, f_{2}, \cdots, f_{p})).
      \end{CD}
      \]
 says $d_{1,X[\varepsilon]}^{p,-p}(F_{\bullet}(f_{1}+ \varepsilon \dfrac{a}{f_{p+1}}, f_{2}, \cdots, f_{p})+F_{\bullet}(f_{p+1}+ \varepsilon \dfrac{a}{f_{1}}, f_{2}, \cdots, f_{p})) =0 $.
 
Recall that, in Definition 3.4 and Corollary 3.15 in \cite{Y-2}, the $p$-th Milnor K-theoretic cycles is defined as
\[
Z^{M}_{p}(D^{\mathrm{Perf}}(X[\varepsilon])) := \mathrm{Ker}(d_{1,X[\varepsilon]}^{p,-p}).
\]
 
 The above can be summarized as:
 \begin{theorem} \label{theorem: MainThmCycles}
 For $Y' \in \mathrm{T_{Y}Hilb}^{p}(X)$ which is generically defined by $(f_{1}+ \varepsilon g_{1}, f_{2}, \cdots, f_{p})$, where $g_{1}= \dfrac{a}{b} \in O_{X,y}=(O_{X,w})_{P}$, we use $F_{\bullet}(f_{1}+ \varepsilon g_{1}, f_{2}, \cdots, f_{p})$ to denote the Koszul complex associated to $f_{1}+ \varepsilon g_{1}, f_{2}, \cdots, f_{p}$.
 
\begin{itemize}
\item Case 1: if $b \notin (f_{1}, f_{2}, \cdots, f_{p}, f_{p+1})$,  then $F_{\bullet}(f_{1}+ \varepsilon g_{1}, f_{2}, \cdots, f_{p}) \in Z^{M}_{p}(D^{\mathrm{Perf}}(X[\varepsilon]))$. \ \

\item Case 2: if $b \in (f_{1}, f_{2}, \cdots, f_{p}, f_{p+1})$, we are reduced to considering $b=f_{p+1}$. Then there exists $Z \subset X$ which is generically defined by $(f_{p+1}, f_{2}, \cdots, f_{p})$ and there exists $Z' \in \mathrm{T_{Z}Hilb}^{p}(X)$ which is generically defined by $(f_{p+1}+ \varepsilon \dfrac{a}{f_{1}}, f_{2}, \cdots, f_{p})$
such that $F_{\bullet}(f_{1}+ \varepsilon \dfrac{a}{f_{p+1}}, f_{2}, \cdots, f_{p})+F_{\bullet}(f_{p+1}+ \varepsilon \dfrac{a}{f_{1}}, f_{2}, \cdots, f_{p}) \in Z^{M}_{p}(D^{\mathrm{Perf}}(X[\varepsilon]))$. 
 
\end{itemize}
 \end{theorem}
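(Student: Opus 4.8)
The plan is to obtain Theorem~\ref{theorem: MainThmCycles} as a formal consequence of Theorem~\ref{theorem: mainTheorem}, transporting the vanishing of $\partial_{1}^{p,-p}$ across the Chern character via the commutative square displayed just above (a special case of Theorem~3.14 in \cite{Y-2}). The crucial feature of that square is that its lower horizontal arrow
\[
\mathrm{Ch}\colon \bigoplus_{x[\varepsilon]\in X[\varepsilon]^{(p+1)}}K^{M}_{-1}(O_{X,x}[\varepsilon]\ \mathrm{on}\ x[\varepsilon]) \xrightarrow{\ \cong\ } \bigoplus_{x\in X^{(p+1)}}H_{x}^{p+1}(\Omega_{X/\mathbb{Q}}^{p-1})
\]
is an isomorphism, so that $\partial_{1}^{p,-p}(\pi(-))=0$ is \emph{equivalent} to $d_{1,X[\varepsilon]}^{p,-p}$ of the corresponding Koszul class being $0$, i.e.\ to membership in $Z^{M}_{p}(D^{\mathrm{Perf}}(X[\varepsilon]))=\mathrm{Ker}(d_{1,X[\varepsilon]}^{p,-p})$.

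First I would spell out that, by Definition~\ref{definition: map1} and Definition~\ref{definition: composition}, $\mu(Y')$ is the class of $F_{\bullet}(f_{1}+\varepsilon g_{1},f_{2},\dots,f_{p})$ in the $y$-component of $\bigoplus_{x[\varepsilon]\in X[\varepsilon]^{(p)}}K^{M}_{0}(O_{X,x}[\varepsilon]\ \mathrm{on}\ x[\varepsilon])$, and $\pi(Y')=\mathrm{Ch}(\mu(Y'))$. Commutativity of the square then yields
\[
\partial_{1}^{p,-p}(\pi(Y')) = \mathrm{Ch}\bigl(d_{1,X[\varepsilon]}^{p,-p}(F_{\bullet}(f_{1}+\varepsilon g_{1},f_{2},\dots,f_{p}))\bigr).
\]
In Case~1, Theorem~\ref{theorem: mainTheorem} gives $\partial_{1}^{p,-p}(\pi(Y'))=0$; since the lower $\mathrm{Ch}$ is injective we get $d_{1,X[\varepsilon]}^{p,-p}(F_{\bullet}(f_{1}+\varepsilon g_{1},f_{2},\dots,f_{p}))=0$, hence $F_{\bullet}(f_{1}+\varepsilon g_{1},f_{2},\dots,f_{p})\in Z^{M}_{p}(D^{\mathrm{Perf}}(X[\varepsilon]))$, which is the assertion. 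For Case~2, after the reduction to $b=f_{p+1}$ made in the proof of Theorem~\ref{theorem: mainTheorem} (so $g_{1}=a/f_{p+1}$, and $Z=\overline{\{z\}}$ is generically defined by $(f_{p+1},f_{2},\dots,f_{p})$ with $Z'$ generically $(f_{p+1}+\varepsilon\frac{a}{f_{1}},f_{2},\dots,f_{p})$), I would run the same argument with $\pi(Y')+\pi(Z')$ in place of $\pi(Y')$: additivity of $\mathrm{Ch}$ and of $d_{1,X[\varepsilon]}^{p,-p}$ identifies $\pi(Y')+\pi(Z')=\mathrm{Ch}\bigl(F_{\bullet}(f_{1}+\varepsilon\frac{a}{f_{p+1}},f_{2},\dots,f_{p})+F_{\bullet}(f_{p+1}+\varepsilon\frac{a}{f_{1}},f_{2},\dots,f_{p})\bigr)$, Theorem~\ref{theorem: mainTheorem} gives $\partial_{1}^{p,-p}(\pi(Y')+\pi(Z'))=0$, and injectivity of the lower $\mathrm{Ch}$ gives the claimed membership in $Z^{M}_{p}(D^{\mathrm{Perf}}(X[\varepsilon]))$.

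The genuinely substantive content is all already inside Theorem~\ref{theorem: mainTheorem} — in particular the comparison of the Koszul resolutions $F_{\bullet}(f_{1},\dots,f_{p},f_{p+1})$ and $F_{\bullet}(f_{p+1},f_{2},\dots,f_{p},f_{1})$ through a change-of-basis matrix of determinant $-1$, together with the identity $\frac{1}{b}-\frac{1}{f_{p+1}}=\frac{-\sum_{i}a_{i}f_{i}}{b f_{p+1}}$ that lets one replace $b$ by $f_{p+1}$ modulo the image of $A_{p}$. What remains for Theorem~\ref{theorem: MainThmCycles} is pure bookkeeping: that $\mu$ and $\pi$ respect the direct-sum decompositions indexed by $X[\varepsilon]^{(p)}$ and $X[\varepsilon]^{(p+1)}$, that the codimension-one specialization feeding $\partial_{1}^{p,-p}$ here is the generic point $w\in X^{(p+1)}$ of $W$, and that every map in sight is additive — all of which is part of the constructions of Section~\ref{K-theory and tangent spaces to Hilbert schemes} and Section~\ref{Chern character}. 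Accordingly I do not expect a real obstacle beyond Theorem~\ref{theorem: mainTheorem} itself; the only point requiring a moment's care is making sure the Chern character used to pass from $\partial_{1}^{p,-p}$ back to $d_{1,X[\varepsilon]}^{p,-p}$ is indeed the isomorphism occurring in the bottom row of the square and not merely the (possibly non-injective) top one.
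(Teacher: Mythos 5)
Your proposal is correct and follows essentially the same route as the paper: the paper likewise deduces Theorem~\ref{theorem: MainThmCycles} from Theorem~\ref{theorem: mainTheorem} by chasing the commutative square from Theorem~3.14 of \cite{Y-2}, using that the lower Chern character on the $K^{M}_{-1}$ row is an isomorphism to transfer $\partial_{1}^{p,-p}(\pi(Y'))=0$ (resp.\ $\partial_{1}^{p,-p}(\pi(Y')+\pi(Z'))=0$) into $d_{1,X[\varepsilon]}^{p,-p}=0$ on the Koszul classes. Your closing remark about needing injectivity of the bottom arrow rather than the top one is exactly the point the paper's diagram encodes.
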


The existence of $Z$ and $Z' \in \mathrm{T_{Z}Hilb}^{p}(X)$ has applications in deformation of cycles, see \cite{Y-4} for a concrete example of eliminating obstructions to  deforming curves on a three-fold.

\end{document}